\DeclareSymbolFont{cyrletters}{OT2}{wncyr}{m}{n}
\DeclareMathSymbol{\sha}{\mathalpha}{cyrletters}{"58}
\newcommand{\abs}[1]{\left\lvert#1\right\rvert}
\newcommand{\norm}[1]{\lVert#1\rVert}
\newcommand{\mbf}{\mathbb{F}}
\newcommand{\mbr}{\mathbb{R}}
\newcommand{\mbz}{\mathbb{Z}}
\newcommand{\mbc}{\mathbb{C}}
\newcommand{\mbq}{\mathbb{Q}}
\renewcommand{\:}{\colon}
\newcommand{\ra}{\rightarrow}
\newcommand{\bs}{\backslash}
\newcommand{\ssq}{\subseteq}
\newcommand{\wtilde}{\widetilde}
\DeclareMathOperator{\ord}{ord}
\newcommand*{\longhookrightarrow}{\ensuremath{\lhook\joinrel\relbar\joinrel\rightarrow}}
\newcommand*{\inj}{\longhookrightarrow}
\newcommand{\bbm}{\begin{bmatrix}}
\newcommand{\ebm}{\end{bmatrix}}
\newcommand{\bpm}{\begin{pmatrix}}
\newcommand{\epm}{\end{pmatrix}}
\newcommand{\xym}[1]{\xymatrix{#1}}
\newcommand{\gen}[1]{\langle #1 \rangle}
\DeclareMathOperator{\im}{Im}
\renewcommand{\Im}{\im}
\newtheorem{theorem}{Theorem}[section]
\newtheorem{lemma}[theorem]{Lemma}
\newtheorem{proposition}[theorem]{Proposition}
\newtheorem*{theorem*}{Theorem}
\newtheorem{corollary}[theorem]{Corollary}
\newtheorem{question}[theorem]{Question}
\newtheorem{example}[theorem]{Example}
\newtheorem*{rep@theorem}{\rep@title}
\newcommand{\newreptheorem}[2]{%
	\newenvironment{rep#1}[1]{%
		\def\rep@title{#2 \ref{##1}}%
		\begin{rep@theorem}}%
		{\end{rep@theorem}}}
\theoremstyle{definition}
\newtheorem{definition}[theorem]{Definition}
\newtheorem{remark}[theorem]{Remark}
\newtheoremstyle{algorithmstyle}
{10pt}      
{5pt}       
{}  
{}          
{\bfseries} 
{}         
{ }      
{}          
\theoremstyle{definition}
\newtheorem{problem}{Problem}
\newtheorem*{assumptions*}{Assumptions}
\theoremstyle{algorithmstyle}
\newtheorem{algorithm}[theorem]{Algorithm}
\title{Solving p-adic polynomial systems via iterative eigenvector algorithms}
\author{Avinash Kulkarni}
\address{Max Planck Institute MIS Leipzig}
\email{avinash@mis.mpg.de}
\date{\today}
\subjclass[2010]{15A18 (primary), 11S05 (secondary)}
\keywords{p-adic linear algebra, solving polynomial systems, eigenvector algorithms.}
\DeclareMathOperator{\GL}{GL}
\DeclareMathOperator{\Ogroup}{O}
\begin{document}

\maketitle

\begin{abstract}
	In this article, we describe an implementation of a polynomial system solver to compute the approximate solutions of a $0$-dimensional polynomial system with finite precision $p$-adic arithmetic. We also describe an improvement to an algorithm of Caruso, Roe, and Vaccon for calculating the eigenvalues and eigenvectors of a $p$-adic matrix.
\end{abstract}

\section{Introduction}

	Let $k$ be a field and let $f_1, \ldots, f_m \in k[x_1, \ldots, x_n]$ be polynomials such that the variety defined by $f_1, \ldots, f_m$ has dimension $0$. It is often of interest to compute the solutions of such a system, either exactly or approximately. A significant bottleneck in computing the exact solutions of such a polynomial system is the complexity of the field extension required to write down all of the solutions. A popular method to compute the solutions exactly is to compute a triangular decomposition for the ideal $\gen{f_1, \ldots, f_m}$ and solve for the coordinates via back-substitution. A difficulty with this approach is the explosion of the size of the coefficients, which in conjunction with taking complicated field extensions, usually renders a computer algebra system unresponsive.
	
	Computations in an exact field (such as $\mbq$) can often be approximated by computations in an inexact field (such as $\mbr, \mbc$), where by
	inexact, we mean that the computer representation of an element is only an approximation up to some precision. It is often far faster to compute an approximation to the solutions, though the trade-off is that the solutions are not known perfectly. In numerous applications, a real or complex approximation to the solutions is sufficient.
	
	In Linear Algebra, $p$-adic methods go back to at least Dixon~\cite{dixon1982exact}. One advantage pointed out by Dixon is that a linear system can be ill-conditioned with respect to an archimedean norm, but well-conditioned $p$-adically; general polynomial systems exhibit this behavior as well. Recently, new developments in number theory and tropical geometry highlight that non-archimedean data is not just a route to integral solutions, but is significant in and of itself.
	
	Our polynomial system solver is based on the truncated normal form solver of \cite{TMV2018truncated}. To transplant their implementation to the $p$-adic setting, we rely on finite precision linear algebra over the $p$-adic field, which in analogy to the case over $\mbr$ we refer to as \emph{pnumerical linear algebra}. It is essential for us to be able to compute the eigenvalues and eigenvectors of an approximate $p$-adic matrix quickly and stably. The naive algorithm to compute the eigenvalues of a matrix relies on the calculation and factorization of the characteristic polynomial. In the numerical setting (i.e, over $\mbr$) it is desirable to avoid this step due to the instability of solving for the roots of a polynomial. There is a similar loss of precision in solving for the roots of a $p$-adic polynomial when the roots of the polynomial are $p$-adically close together. Unlike the archimedean setting, the accurate calculation of the characteristic polynomial is more difficult since a division free algorithm must be used \cite[Introduction]{CRV2017characteristic}. Our idea to improve on the algorithm of \cite{CRV2017characteristic} is to adapt the ideas from classical numerical linear algebra to the pnumerical setting; specifically to use an iterative scheme to compute the eigenvectors or eigenvalues. To our knowledge, this is the first appearance of a $p$-adic numerical algorithm based on iterating matrix multiplication to solve for eigenvalues and eigenvectors.
	
	The significant deviation from the classical numerical setting is the preconditioning strategy. A general matrix over $\mbr$ tends to have eigenvalues of distinct absolute values, so aspects of the iteration are dominated by the unique largest eigenvalue. In the $p$-adic setting, it tends to be the case that all of the eigenvalues have identical absolute value, and this is true for most shifts of the form $A \mapsto A-\mu I$ as well. Our strategy is to first approximate the eigenvalues by computing the characteristic polynomial over the residue field, then refine the initial approximation via iteration. The advantage of this approach is that the computation of the characteristic polynomial of a matrix in $M_n(\mbf_q)$ is significantly faster than computing the characteristic polynomial over $\mbq_p$~\cite{CRV2017characteristic,Storjohann2001frobenius}. The basic Hensel-lifting based strategy applied to the system of equations $Av = \lambda v, {v_i = 1}$, with $v_i$ some fixed entry of $v$, would require the computation of the inverse of the Jacobian matrix of the map $F\: (v,\lambda) \mapsto (Av - \lambda v, v_i-1)$ at each step in the iteration. Our cost per iteration step is much smaller. Unfortunately, matrices of the form $\mu I + N$, with $N \in M_n(\mbz_p)$ topologically nilpotent, do not allow us to apply our deflation strategy as the characteristic polynomial over the residue field is unhelpful; we hope that future improvements to our strategy can be made.
	
	The structure of the article is as follows. In Section~\ref{sec: linear algebra} we discuss some background and terminology regarding the subject of finite precision computation over $\mbq_p$. We review some specific results on linear algebra over the $p$-adic field, especially the $p$-adic analogues of condition-stable algorithms based on matrix factorizations. We describe our iterative strategy for the computation of the eigenvectors of a matrix over $\mbq_p$ and compare it to the existing alternatives. In Section~\ref{sec: solve equations}, we describe our adaptations of the algorithm of \cite{TMV2018truncated} to solve polynomial systems over $\mbq_p$. Our algorithms are available as \texttt{Julia} packages, and can be obtained from: 
	\begin{center}
		\begin{tabular}{l}
			\url{https://github.com/a-kulkarn/pAdicSolver} \\
			\url{https://github.com/a-kulkarn/Dory}
		\end{tabular}
	\end{center}
	(For technical reasons related to the \texttt{Julia} package system, we choose to divide our implementation between two packages.)

\section{Linear algebra} \label{sec: linear algebra}

	\subsection{Precision, norms, and condition numbers}
	
		We state some basic definitions for our discourse. We direct the reader to \cites{CRV2015linear, caruso2017computationswithpadics, Kedlaya2010differential} for more details. For a matrix $A \in M_n(\mbz_p)$, we will denote by $\chi_A$ the characteristic polynomial and denote $\chi_{A,p} := \chi_{A} \pmod p$. We denote the standard basis of $\mbq_p^n$ (or $\mbz_p^n$) by $\{ e_1, \ldots, e_n\}$.

		\begin{definition}
			Let $v := (v_1, \ldots, v_n) \in \mbq_p^n$ be a vector. The \emph{norm} of $v$ is
				\[
					\norm{v}_p := \sup_{1 \leq i \leq n} \{ \abs{v_i}_p \}.
				\]
		\end{definition}
	
		\begin{definition}
			Let $A \in \mbq_p^{n \times m}$ be a matrix. The \emph{operator norm} of $A$ with respect to $\norm{\cdot}_p$ is
				\[
					\norm{A}_p := \sup_{v \in \mbq_p^m \bs \{0\}} \frac{\norm{Av}_p}{\norm{v}_p}.
				\]
			The \emph{condition number} for an invertible square matrix is $\kappa(A) := \norm{A}_p \cdot \norm{A^{-1}}_p$. The condition number for a singular matrix is $\infty$.
		\end{definition}
	
		We can identify a subgroup of $\GL_n(\mbq_p)$ where every matrix is well-conditioned, serving the analogous role to $\operatorname{O}_n(\mbr)$ in the real setting.
			
		\begin{lemma}
			Let $A \in \GL_n(\mbz_p)$. Then $\kappa(A) = 1$.
		\end{lemma}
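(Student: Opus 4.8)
The plan is to prove separately that $\norm{A}_p = 1$ and $\norm{A^{-1}}_p = 1$, from which $\kappa(A) = \norm{A}_p \cdot \norm{A^{-1}}_p = 1$ follows at once.

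First I would record the elementary fact that every matrix $M \in M_n(\mbz_p)$ satisfies $\norm{M}_p \le 1$: for any nonzero $v \in \mbq_p^n$, each coordinate of $Mv$ is a $\mbz_p$-linear combination of the coordinates of $v$, so by the ultrametric inequality it has absolute value at most $\norm{v}_p$; hence $\norm{Mv}_p \le \norm{v}_p$, and taking the supremum over $v$ gives $\norm{M}_p \le 1$. Applying this with $M = A$ yields $\norm{A}_p \le 1$. Next, since $A \in \GL_n(\mbz_p)$, the inverse $A^{-1}$ again lies in $M_n(\mbz_p)$ — concretely, $\det A \in \mbz_p^\times$ and the adjugate of $A$ has entries in $\mbz_p$, so Cramer's rule puts $A^{-1} \in M_n(\mbz_p)$ — and the same fact gives $\norm{A^{-1}}_p \le 1$.

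Finally I would combine these bounds with submultiplicativity of the operator norm (immediate from the definition) and the obvious identity $\norm{I}_p = 1$: from
\[
	1 = \norm{I}_p = \norm{A A^{-1}}_p \le \norm{A}_p \cdot \norm{A^{-1}}_p \le 1
\]
we conclude that equality holds throughout, so in particular $\norm{A}_p = \norm{A^{-1}}_p = 1$ and therefore $\kappa(A) = 1$.

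There is no genuinely hard step here; the only point that deserves a moment's care is that membership in $\GL_n(\mbz_p)$ — as opposed to merely $A \in M_n(\mbz_p) \cap \GL_n(\mbq_p)$ — is precisely what guarantees that $A^{-1}$ is itself integral, which is what makes the bound $\norm{A^{-1}}_p \le 1$ available and hence forces $\kappa(A) = 1$ rather than just $\kappa(A) \ge 1$.
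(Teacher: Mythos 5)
Your proof is correct and complete. The paper itself gives no argument — it simply points to the comments following Definition 4.3.3 in Kedlaya's book — but the argument implicit there is precisely the standard one you spell out: integrality of $A$ and of $A^{-1}$ gives $\norm{A}_p \le 1$ and $\norm{A^{-1}}_p \le 1$, and submultiplicativity together with $\norm{I}_p = 1$ forces both to equal $1$, so $\kappa(A)=1$.
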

	
		\begin{proof}
			See the comments following \cite[Definition 4.3.3]{Kedlaya2010differential}.
		\end{proof}
	
		We now come to the discussion of $p$-adic precision. There are many possible ways to represent a $p$-adic element  $a \in \mbq_p$  in a computer system \cite{caruso2017computationswithpadics}. In the \texttt{Nemo/Hecke} systems \cite{nemohecke}, an element is represented by a series
			\[
				a = a_{-r}p^{-r} + \ldots + a_0 + pa_1 + p^2 a_2 + \ldots + a_{N-1}p^{N-1} + O(p^N)
			\]
		where the $O(p^N)$ is the $p$-adic ball representing the uncertainty of the remaining digits. The \emph{relative precision} of $a$ is the quantity $N + r$, and the \emph{absolute precision} is the number $N$. In the terminology of \cite{caruso2017computationswithpadics}, we consider a system with the \emph{zealous} (i.e, \emph{interval}) implementation of arithmetic. The operations $-, +$ preserve the minimum of the absolute precision of the operands, and $\times, \div$ preserve the minimum relative precision of the operands. If $u \in \mbz_p^\times$, $a \in \mbz_p$, and $N \leq N'$, then we have that $(u+O(p^{N'}))(a + O(p^{N})) = ua + O(p^N)$. Multiplication by $p$ preserves the relative precision and increases the absolute precision by $1$. The worst operation when it comes to absolute $p$-adic precision is dividing a small number by $p$. For example, the expression
			\[
			\frac{(1 + p^{99} + O(p^{100})) - (1 + O(p^{100}))}{p^{100} + O(p^{200})} = p^{-1} + O(1)
			\]
		begins with $3$ numbers with an absolute and relative precision of at least $100$, and ends with a result where not even the constant term is known. 
				
		In \texttt{Nemo/Hecke}, elements of a matrix store their own precision. In the terminology of \cite{CRV2017characteristic}, the associated precision structure is the jagged precision lattice. However, we will not keep track of this finer precision here. Instead, in the terminology of \cite{CRV2017characteristic}, we will look at the flat precision of the matrix:		
		\begin{definition} \label{def: matrix oh-notation}
			Let $A,B \in M_n(\mbz_p)$ be matrices such that $a_{i,j} = b_{i,j} + O(p^{N_{i,j}})$. Then we write $A = B + O(p^N)$, where $N := \min_{i,j} N_{i,j}$.
		\end{definition}
		To refer to a matrix $A \in M_n(\mbq_p)$ whose elements are known at an absolute precision at least $N$, we will simply write $A + O(p^N)$. 
		
		\subsubsection{Design choices for the implementation}
		
		We discuss some of the design choices of the computer algebra package accompanying this article. The user we had in mind while designing our system is one who is interested in data that is inherently $p$-adic. More specifically, we have assumed the following hypotheses: 
		
		\begin{assumptions*}
			\
			\begin{itemize}
			\item
				Input and output are inherently approximate.
			\item
				The desired output precision is roughly the input precision.
			\item
				Element-wise gains in absolute precision after $a \mapsto pa$ or $a \mapsto a^p$ are usually lost in a subsequent step.
			\end{itemize}
		\end{assumptions*}
		
		We believe that our linear algebra package will be used in the middle of an ongoing $p$-adic computation, such as solving polynomial systems of equations. In such a circumstance, it does not make sense for our algorithm to ``request'' extra digits of precision from the input data since the input itself is approximate. We also assume that if a user inputs data at precision $N$, they expect output at precision roughly $N$, or at least $N$ minus the condition number for the problem. Computations using relaxed $p$-adic numbers \cite{caruso2017computationswithpadics} allow the desired output precision to be specified. Unfortunately, relaxed arithmetic does not seem well-suited for use in an iterative scheme due to the extra memory costs (see \cite[Section 2.4]{caruso2017computationswithpadics}). We also assume for a dense matrix that the entries are all given at the same flat absolute precision. We believe that this situation is fairly common in practical applications, but of course not ubiquitous. That said, the implementation of $p$-adic matrices in \cite{nemohecke} uses an element-wise precision model, so it is possible for a computation using our software to validly return a result with more accuracy than we indicate here.
				
		In our analysis, we do not account for the extra precision on an element $a \in \mbq_p$ gained after $a \mapsto pa$ or $a \mapsto a^p$. Experimentally, this seems to be a reasonable assumption for randomly selected dense matrices over a $p$-adic field, with $p \geq 40$. For a more structured scenario this assumption might not be reasonable. A nice feature of capped precision is that we do not have to account for the growth of arithmetic costs due to increased precision. 
		
		With these assumptions made, it is often useful to think of the computer as performing arithmetic in a $\mbz/p^N\mbz$-module for a computation with no divisions by $p$. It is useful to consider the image of a matrix $A \in M_n(\mbz_p)$ in the ring $M_n(\mbz/p^N\mbz)$. It is an essential difficulty of finite precision $p$-adic linear algebra that $\mbz/p^N\mbz$ is a local principal ideal ring, but not a domain when $N > 1$.
		
		\begin{remark}
			Our eigenvector algorithms are focused on finding only the eigenvectors defined over $\mbq_p$. We made this choice for the following two reasons. First, in the last section of the article, we focus on finding the $\mbq_p$-solutions of a $0$-dimensional system of polynomial equations over $\mbq_p$. Secondly, our present implementation is written using components of the OSCAR system (specifically \cite{nemohecke}) that are currently under development. At the time of writing the first version of our implementation, the ``$q$-adics'' interface or an interface to handle ramifed extensions of $\mbq_p$ did not exist. Thus, our article is written with $\mbq_p$ in mind to more closely reflect the actual software. Rapid progress is being made on the system, so we expect our implementation to evolve over time as well.
		\end{remark}
			
	\subsection{Matrix factorizations}

	In this section, we very briefly review some matrix invariants and factorizations. Matrix factorizations relating to the topological structure of $\mbq_p$ are unsurprisingly related to the algebraic structure of the $\mbz_p$-module spanned by the columns (or rows) of the matrix. We invite the reader to consider \cite[Chapter 4]{Kedlaya2010differential} for further details.

	\begin{proposition}[Iwasawa decomposition] \label{prop: iwasawa decomposition}
		Let $k$ be either a finite extension of $\mbq_p$, $\mbr$, or $\mbc$ and let $G \inj \GL_n(k)$ be a linear algebraic group, let $K$ be a maximal compact subgroup of $G$, and let $B$ be a Borel subgroup. For any $A \in G$, there exists a $Q \in K$ and an $R \in B$ such that $A = QR$. 
	\end{proposition}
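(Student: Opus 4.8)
The assertion is the Iwasawa decomposition. The plan is to prove by hand the case $G = \GL_n$ — the only case actually used in the sequel — and to obtain the general statement from the structure theory of reductive groups over local fields.

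For $G = \GL_n$ over $k = \mbr$ or $\mbc$, take $K = \Ogroup_n(\mbr)$ (resp.\ the unitary group) and $B$ the group of invertible upper-triangular matrices; then $A = QR$ is exactly the Gram--Schmidt orthonormalization of the columns of $A$, which produces an orthogonal (resp.\ unitary) $Q$ and an upper-triangular $R$, invertible because $A$ is. This is classical.

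For $G = \GL_n$ over a finite extension $k/\mbq_p$ with ring of integers $\mbo$, take $K = \GL_n(\mbo)$ — a maximal compact, and well-conditioned by the preceding lemma — and again $B$ the invertible upper-triangular matrices. I would induct on $n$. Given $A$ with first column $a_1 \neq 0$, choose a scalar $r_1$ with $\abs{r_1} = \norm{a_1}_p$, so that $a_1/r_1$ is a primitive vector of $\mbo^n$; since $\mbo$ is local, $\mbo^n / \mbo(a_1/r_1)$ is free of rank $n-1$, hence $a_1/r_1$ extends to an $\mbo$-basis of $\mbo^n$. Expressing the remaining columns of $A$ in this basis and clearing their $(a_1/r_1)$-components by a right multiplication by an upper-triangular matrix reduces the problem to an $(n-1)\times(n-1)$ matrix over $k$; by induction this factors, and bookkeeping the basis-change matrices (which assemble into an element of $\GL_n(\mbo)$) and the triangular operations (which assemble into an element of $B$) yields $A = QR$. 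Equivalently, this is the existence of an echelon $\mbo$-basis, adapted to the standard flag, of the lattice spanned by the columns of $A$.

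For a general linear algebraic group $G \inj \GL_n(k)$ with maximal compact $K$ and $k$-rational Borel $B$, the claim is the Iwasawa decomposition in full generality: over $\mbr$ and $\mbc$ one checks $\mathfrak{g} = \mathfrak{k} + \mathfrak{b}$, so the $K$-orbit of the basepoint in the compact connected flag variety $G/B$ is open and closed, hence all of $G/B$, which gives $G = KB$; over $k/\mbq_p$ the same conclusion follows from Bruhat--Tits theory, with $K$ the stabilizer of a special point acting transitively on $(G/B)(k)$. I expect the main obstacle to lie exactly here: $p$-adically one must be careful about which maximal compact is meant (there may be several conjugacy classes, and the factorization needs $K$ and $B$ to be suitably ``aligned''), and the transitivity of $K$ on $G/B$ is precisely the input that cannot be supplied without the structure theory of reductive groups over local fields. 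Since only $G = \GL_n$ is needed below, I would give the $\GL_n$ argument in full and cite the literature for the general case.
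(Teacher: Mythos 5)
The paper does not prove this proposition; it is stated as a known fact and immediately specialized to $G = \GL_n$ in the two $QR$-factorization corollaries that follow, so there is no argument in the text to compare yours against. Your treatment of the $\GL_n$ case is correct: Gram--Schmidt over $\mbr$ or $\mbc$, and, over a finite extension of $\mbq_p$ with ring of integers $\mbo$, induction via extending the primitive vector $a_1/r_1$ to an $\mbo$-basis of $\mbo^n$ (equivalently, putting the lattice generated by the columns of $A$ into echelon form relative to the standard flag). This is precisely what Corollary~\ref{cor: padic QR} requires, and it matches the paper's subsequent remark that the $p$-adic $QR$-decomposition is computed as a $PLU$-decomposition with $p$-adic pivoting. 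Your caveat about the general statement is also well placed: as written, the proposition quantifies over an arbitrary linear algebraic group $G$, an arbitrary maximal compact $K$, and an arbitrary Borel $B$, and in that generality the Iwasawa decomposition requires $G$ to be reductive with a $k$-rational Borel, and, in the non-archimedean case, $K$ to be a \emph{special} maximal compact in the sense of Bruhat--Tits (otherwise $K$ need not act transitively on the flag variety). For $\GL_n$ these conditions are automatic --- every maximal compact is conjugate to $\GL_n(\mbo)$ and acts transitively on flags --- so the uses in the paper are unaffected; but a literal proof of the proposition as stated would have to either add those hypotheses or restrict to $\GL_n$, which is exactly the resolution you propose.
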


	Note that a maximal compact subgroup of $\GL_n(\mbr)$ is the orthogonal group $\operatorname{O}_n(\mbr)$, and the subgroup of invertible real upper triangular matrices is a Borel subgroup. We have as a consequence:

	\begin{corollary}[$QR$-factorization]
		Let $A \in \mbr^{n \times m}$ be a matrix. Then there exists a $Q \in \operatorname{O}_n(\mbr)$ and an upper triangular matrix $R \in \mbr^{n \times m}$ such that $A = QR$. 
	\end{corollary}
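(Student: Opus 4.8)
The plan is to reduce the statement to the Iwasawa decomposition (Proposition~\ref{prop: iwasawa decomposition}) applied to the group $G = \GL_n(\mbr)$, inside which $K = \Ogroup_n(\mbr)$ is a maximal compact subgroup and the group $B$ of invertible upper triangular real matrices is a Borel subgroup. The point requiring care is that Proposition~\ref{prop: iwasawa decomposition} only concerns elements of $G$ --- square invertible matrices --- whereas here $A$ is allowed to be rectangular and to have non-maximal rank. So the work is entirely in bootstrapping from the square invertible case.

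First consider $A$ square. If $A \in \GL_n(\mbr)$, then Proposition~\ref{prop: iwasawa decomposition} immediately yields $A = QR$ with $Q \in \Ogroup_n(\mbr)$ and $R \in B$, in particular $R$ upper triangular. If $A$ is square but singular, I would argue by perturbation and compactness: since $\det(A + tI)$ is a monic, hence nonzero, polynomial in $t$, the matrix $A_j := A + \tfrac{1}{j} I$ lies in $\GL_n(\mbr)$ for all sufficiently large $j$, and $A_j \to A$. Writing $A_j = Q_j R_j$ with $Q_j \in \Ogroup_n(\mbr)$ and $R_j$ upper triangular, compactness of $\Ogroup_n(\mbr)$ lets me pass to a subsequence along which $Q_j \to Q \in \Ogroup_n(\mbr)$; then $R_j = Q_j^{-1} A_j \to Q^{-1} A =: R$, and since the set of upper triangular matrices is closed, $R$ is upper triangular and $A = QR$.

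It remains to reduce an arbitrary $A \in \mbr^{n \times m}$ to the square case just treated. If $m < n$, pad $A$ to a square matrix $\widetilde A = [\,A \mid 0\,] \in \mbr^{n \times n}$; a factorization $\widetilde A = Q \widetilde R$ with $\widetilde R$ upper triangular restricts on the first $m$ columns to $A = QR$, where $R \in \mbr^{n \times m}$ has vanishing strictly-subdiagonal entries because those of $\widetilde R$ already vanished. If $m \ge n$, split $A = [\,A_1 \mid A_2\,]$ with $A_1 \in \mbr^{n \times n}$, take $A_1 = Q R_1$ with $R_1$ upper triangular, and observe that $Q^{-1} A = [\,R_1 \mid Q^{-1} A_2\,]$ has all of its strictly-subdiagonal entries inside the leading block $R_1$, hence is upper triangular; set $R := Q^{-1} A$. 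The main obstacle is the singular square case: because the Iwasawa decomposition is a statement about a group, extending it to non-invertible (and to rectangular) matrices is not formal and requires the topological inputs that $\Ogroup_n(\mbr)$ is compact and that upper triangularity is a closed condition.
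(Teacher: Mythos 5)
Your proof is correct and follows the route the paper intends: identify $\Ogroup_n(\mbr)$ as a maximal compact subgroup of $\GL_n(\mbr)$ and the invertible real upper triangular matrices as a Borel subgroup, then invoke Proposition~\ref{prop: iwasawa decomposition}. The paper, however, offers no written proof beyond stating these identifications and remarking afterwards that $QR$-factorization ``is really just a consequence of Gram--Schmidt,'' so it silently skips exactly the issue you flag: the Iwasawa decomposition applies only to elements of the \emph{group} $\GL_n(\mbr)$, i.e.\ square invertible matrices, whereas the corollary is stated for arbitrary rectangular $A$. Your perturbation-and-compactness argument for the singular square case (using that $\Ogroup_n(\mbr)$ is compact and that upper triangularity is a closed condition) and your zero-padding/column-splitting reduction of the rectangular cases to the square one are both correct and genuinely close the gap the paper glosses over. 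For comparison, the paper's Gram--Schmidt aside is the more elementary route to the full statement: orthonormalize the columns of $A$ from left to right, and whenever a column lies in the span of its predecessors insert a fresh unit vector orthogonal to the running span; this yields $Q$ and $R$ directly for any $n \times m$ matrix, rank-deficient or not, without the group-theoretic machinery or any limiting argument.
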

	
	Of course, $QR$-factorization is really just a consequence of Gram-Schmidt. From the point of view of Proposition~\ref{prop: iwasawa decomposition}, we also have a $p$-adic analogue of $QR$-factorization. In $\GL_n(\mbq_p)$, a maximal compact subgroup is $\GL_n(\mbz_p)$, and the subgroup of invertible upper triangular matrices is a Borel subgroup.
	
	\begin{corollary}[$p$-adic $QR$-factorization] \label{cor: padic QR}
		Let $A \in \mbz_p^{n \times m}$ be a matrix. Then there exists a $Q \in \GL_n(\mbz_p)$ and an upper triangular matrix $R \in \mbz_p^{n \times m}$ such that $A = QR$. 
	\end{corollary}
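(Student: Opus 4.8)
The plan is to prove this directly by induction on $n$, as a $p$-adic analogue of the column-by-column Householder (or Gram--Schmidt) reduction: we row-reduce $A$ to upper-triangular form using only row operations lying in $\GL_n(\mbz_p)$. This amounts to a weak form of Hermite normal form over the principal ideal domain $\mbz_p$, and since $\mbz_p$ is in fact a discrete valuation ring the reduction is especially transparent. Note that Proposition~\ref{prop: iwasawa decomposition} already covers the case $A \in \GL_n(\mbz_p)$, with $G = \GL_n(\mbq_p)$, $K = \GL_n(\mbz_p)$, and $B$ the Borel of invertible upper-triangular matrices: there $A = QR$ with $Q \in \GL_n(\mbz_p)$ and $R = Q^{-1}A$ automatically in $\GL_n(\mbz_p)$ and upper triangular. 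So the content is really the treatment of arbitrary, possibly singular or non-square, $A$.

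The one structural fact about $\mbz_p$ that I will use is that divisibility is totally ordered by the valuation. Concretely: given a nonzero column $c = (c_1, \ldots, c_n)^{T} \in \mbz_p^n$, pick an index $i_0$ with $\abs{c_{i_0}}_p = \norm{c}_p$; then $c_{i_0} \mid c_j$ in $\mbz_p$ for every $j$ (since $\abs{c_j/c_{i_0}}_p \le 1$). Letting $P \in \GL_n(\mbz_p)$ be the permutation exchanging rows $1$ and $i_0$, and $E \in \GL_n(\mbz_p)$ the lower-unitriangular matrix that subtracts suitable $\mbz_p$-multiples of the (new) first row from the remaining rows so as to clear their first entries, we get $U := EP \in \GL_n(\mbz_p)$ with $Uc = c_{i_0}\, e_1$. (If $c = 0$, take $U = I_n$.)

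The induction then runs as follows. When $n = 1$ every $1 \times m$ matrix is already upper triangular, so $Q = I_1$ works (and $m = 0$ is vacuous). For $n > 1$ and $m \ge 1$, apply the previous paragraph to the first column $c$ of $A$ to obtain $U \in \GL_n(\mbz_p)$ with
\[
    UA = \begin{pmatrix} \alpha & w^{T} \\ 0 & B \end{pmatrix}, \qquad \alpha \in \mbz_p,\ \ w \in \mbz_p^{m-1},\ \ B \in \mbz_p^{(n-1)\times(m-1)}.
\]
By the inductive hypothesis $B = Q'R'$ with $Q' \in \GL_{n-1}(\mbz_p)$ and $R' \in \mbz_p^{(n-1)\times(m-1)}$ upper triangular, so that
\[
    UA = \begin{pmatrix} 1 & 0 \\ 0 & Q' \end{pmatrix}\begin{pmatrix} \alpha & w^{T} \\ 0 & R' \end{pmatrix},
\]
and the right-hand factor is upper triangular as an $n \times m$ matrix: its first column is supported in row $1$, and in the columns indexed $2, \ldots, m$ its subdiagonal entries coincide with those of $R'$, hence vanish. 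Thus $A = QR$ with $Q := U^{-1}\left(\begin{smallmatrix} 1 & 0 \\ 0 & Q' \end{smallmatrix}\right) \in \GL_n(\mbz_p)$ and $R$ the right-hand factor, completing the induction.

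I do not anticipate a real obstacle here. The only points needing care are that the elementary row operations used really are invertible \emph{over $\mbz_p$} --- this is exactly where the discrete-valuation-ring hypothesis enters, through the totally ordered divisibility that lets the valuation-minimal entry of a column serve as a pivot, and it is what fails over a general ring --- and the bookkeeping for rank-deficient or non-square $A$, in particular permitting $\alpha = 0$ (equivalently $c = 0$, with $U = I_n$) so that the recursion still goes through.
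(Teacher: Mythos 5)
Your proof is correct, and it fills out the argument the paper itself gestures at: the text immediately after the corollary says an elementary proof ``is just the algorithm to compute a $PLU$-decomposition, though with each row-pivot chosen by taking the vector with the largest $p$-adic norm,'' which is exactly your induction with a minimal-valuation pivot and unipotent clearing below it. Where you genuinely add value is in the observation that the official derivation from Proposition~\ref{prop: iwasawa decomposition} only literally covers $A \in G \subseteq \GL_n(\mbq_p)$, i.e.\ invertible square matrices, so the rectangular and rank-deficient cases really do need the elementary row-reduction argument you carry out (including the $c = 0$ branch with $U = I_n$). The two points that make the reduction work over $\mbz_p$ --- that the valuation-minimal column entry divides all the others, so the clearing multipliers stay in $\mbz_p$, and that the resulting elementary matrices are unipotent or permutations, hence in $\GL_n(\mbz_p)$ --- are exactly the ones you isolate. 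One small cosmetic point: since the recursion descends to a $(n-1)\times(m-1)$ block, the base case should cover $m = 0$ for arbitrary $n$ (the $n \times 0$ matrix is vacuously upper triangular, $Q = I_n$), not just $n = 1$; this is trivial but worth stating so the induction closes cleanly.
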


	We decided to refer to the factorization above as a $QR$-factorization since many applications are similar to the real case. However, one important difference is that $Q$ is generally not an element of $\operatorname{O}_n(\mbq_p)$. As in the case over $\mbr$, an elementary proof of Corollary~\ref{cor: padic QR} can be given by exhibiting an algorithm to compute a $QR$-decomposition. The algorithm is well-known, and in fact it is just the algorithm to compute a $PLU$-decomposition, though with each row-pivot chosen by taking the vector with the largest $p$-adic norm \cite[Chapter 4]{Kedlaya2010differential}. We also note since $Q \in \GL_n(\mbz_p)$, that $R$ can be chosen to be the Hermite normal form of $A$. We summarize the discussion as:
	
	\begin{remark}
		The $p$-adic $QR$-decomposition of $A$ is computed using a $PLU$-decomp\-osition, with pivots chosen by $p$-adic norm.
	\end{remark}
	
	An essential property of the $QR$-factorization of a real matrix $A$ is that $Q$ has a well-conditioned inverse. The same is true $p$-adically. The columns of $Q$ in the $p$-adic case are also ``orthogonal'' in the sense of \cite[Section 50]{Schikhof1984ultrametric}. Unfortunately, this notion of orthogonality does not help to compute the inverse of $Q$ like in the real case. As we might expect, $QR$-factorizations give us a way to compute the singular value decomposition of a matrix in the $p$-adic setting.
	
	\begin{proposition}
		If $A \in M_n(\mbq_p)$, then there exist $U,V \in \GL_n(\mbz_p)$ and a diagonal matrix $\Sigma \in M_n(\mbq_p)$ such that $A = U\Sigma V^T$.
	\end{proposition}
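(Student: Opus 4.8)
The plan is to reduce to the case of an integral matrix, diagonalize it via the Smith normal form over the principal ideal domain $\mbz_p$ (equivalently, by two-sided row and column reduction, which is the analogue on both sides of the $PLU$ elimination that underlies Corollary~\ref{cor: padic QR}), and finally transpose one of the two unimodular factors and undo the scaling.

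First I would pass to an integral matrix. Setting $N := \max\{0, -\min_{i,j} v_p(a_{i,j})\}$, where $v_p$ denotes the $p$-adic valuation, we have $B := p^N A \in M_n(\mbz_p)$. It then suffices to find $U, V \in \GL_n(\mbz_p)$ and a diagonal $D \in M_n(\mbz_p)$ with $B = U D V^T$, for then $A = p^{-N} B = U \Sigma V^T$ with $\Sigma := p^{-N} D$ a diagonal matrix in $M_n(\mbq_p)$, the scalar $p^{-N}$ commuting with $U$.

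The heart of the argument is to diagonalize $B$ using only matrices from $\GL_n(\mbz_p)$, for which I would recall the standard discrete-valuation-ring reduction. Choose an entry $b_{i,j}$ of minimal valuation (if $B=0$ it is already diagonal) and move it to position $(1,1)$ by a row and a column permutation; permutation matrices lie in $\GL_n(\mbz_p)$. Because the valuations on $\mbz_p$ are totally ordered, $b_{1,1}$ now divides every entry in its row and in its column, so one clears the rest of the first row by subtracting suitable $\mbz_p$-multiples of the first column from the others, and then clears the rest of the first column by subtracting $\mbz_p$-multiples of the first row from the others; the latter step does not disturb the already-cleared first row. Every elementary operation used is multiplication by a transvection with entries in $\mbz_p$, hence an element of $\GL_n(\mbz_p)$. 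Recursing on the lower-right $(n-1)\times(n-1)$ block terminates after finitely many steps and produces $U_0 B W_0 = D$ with $U_0, W_0 \in \GL_n(\mbz_p)$ and $D$ diagonal. Putting $U := U_0^{-1}$ and $V := (W_0^T)^{-1}$, and using that transposition preserves the determinant so that $V \in \GL_n(\mbz_p)$ and $V^T = W_0^{-1}$, we get $B = U_0^{-1} D W_0^{-1} = U D V^T$, as needed.

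There is no deep obstacle here; the remaining points are bookkeeping. One checks that every matrix produced in the reduction genuinely lies in $\GL_n(\mbz_p)$, that transposition and multiplication by the scalar $p^{-N}$ keep us inside the claimed groups, and that the divisibility exploited when clearing a pivot row and column is exactly the statement that $\mbz_p$ is a discrete valuation ring. The one feature worth flagging is that, unlike the $p$-adic $QR$-factorization and the Smith form of an \emph{integral} matrix, the diagonal factor $\Sigma$ need not have entries in $\mbz_p$ once $A$ has entries of negative valuation; this is harmless, since all the noncompactness has been pushed into $\Sigma$. One could instead try to run the integral step through Corollary~\ref{cor: padic QR} directly, as the preceding remark suggests, but a single $p$-adic $QR$ on each side merely triangularizes the matrix; obtaining a diagonal form in finitely many steps is precisely what the two-sided reduction above accomplishes.
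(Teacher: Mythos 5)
Your proof is correct, but it takes a genuinely different route from the paper's. You compute the Smith normal form of the integral matrix $p^N A$ over the discrete valuation ring $\mbz_p$ by two-sided elementary row and column operations (move a minimal-valuation pivot to the corner, use divisibility to clear its row and column, recurse on the complementary block), and then transpose one unimodular factor and absorb the scalar $p^{-N}$ into $\Sigma$. The paper instead proves the proposition by applying Corollary~\ref{cor: padic QR} twice: take a $QR$-decomposition with column pivoting $A = URP'$, then a second $QR$-decomposition $R^T = V\Sigma$, and read off the factorization. The two arguments are of course closely related --- the paper itself remarks immediately afterward that this $p$-adic singular value decomposition \emph{is} the Smith normal form of the $\mbz_p$-module spanned by the columns of $A$, so you have essentially given a direct proof of that remark. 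What each buys: your version is self-contained, makes the DVR structure of $\mbz_p$ (total ordering of valuations, so a single pivot divides every entry in its row and column) the explicit engine, and is careful about why the cleared entries stay cleared. The paper's version is shorter and foregrounds the algorithmic connection to its $QR$ machinery --- mirroring the classical two-$QR$ route to the SVD over $\mbr$ --- at the cost of leaving it to the reader to see why the second $QR$ step, applied to the transposed pivoted triangular factor, actually produces a diagonal (not merely triangular) $\Sigma$. Both reach the same conclusion.
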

	
	\begin{proof}
		Let $A = URP'$ be a $QR$-decomposition with column pivoting. Now let $R^T = V\Sigma$ be a $QR$-decomposition. The result follows.
	\end{proof}
	
	The singular value decomposition is the Smith normal form of the $\mbz_p$-module generated by columns of $A$ \cite[Chapter 4]{Kedlaya2010differential}. Since the matrices $U,V$ are well-conditioned, we can use a singular value decomposition to stabilize pnumerical computations (as in the real setting).

	\subsection{The eigenvector problem: semantics}
		
		Let $A \in M_n(\mbz_p)$ be a matrix. In the setting of a finite $p$-adic precision computation, we may formulate two versions of the eigenvector problem:
		
		\begin{problem}[Eigenvector problem, Version~I]
			 Let $(\lambda, v)$ be an exact eigenpair for $A$ over $\mbz_p$. Given an approximation $\wtilde A$ to $A$ such that $A = \wtilde A + O(p^N)$, compute a pair $(\wtilde \lambda, \wtilde v)$ such that $\norm{\wtilde v} = 1$ and $\wtilde A \wtilde v = \wtilde \lambda \wtilde v + O(p^N)$. 
		\end{problem}
		
		\begin{problem}[Eigenvector problem, Version~II]
			Let $(\lambda, v)$ be an exact eigenpair for $A$ over $\mbz_p$. Given an approximation $\wtilde A$ to $A$ such that $A = \wtilde A + O(p^N)$, compute a triple $( \wtilde \lambda, \wtilde v, M) \in \mbz_p \times \mbz_p^n \times \mbz$ such that:
				\begin{enumerate}[(i)]
					\item
					$\norm{\wtilde v}=1$, $\lambda = \wtilde \lambda + O(p^M)$ and $v = \wtilde v + O(p^M)$,
					\item
					for any $B = 0 + O(p^N)$, we have that $(\tilde A + B)\wtilde v = \wtilde \lambda \wtilde v + O(p^M)$,
					\item
					$M$ is maximal among such triples.
				\end{enumerate}
		\end{problem}
	
		In Version~II, the second criteria ensures that the problem is well-posed.
		
		\begin{lemma}
			Let $A \in M_n(\mbz_p)$, let $\wtilde A$ be an approximation such that $A = \wtilde A + O(p^N)$, and let $(\lambda,v,M)$ satisfy the first two criteria for a solution to Version~II. Then $M \leq N$. 
		\end{lemma}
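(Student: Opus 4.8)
The plan is to use criterion~(ii) to isolate the effect of an admissible perturbation on $\widetilde v$, and then to choose one explicit perturbation that shifts $\widetilde A\widetilde v$ by exactly $p^{-N}$, exploiting the normalization $\norm{\widetilde v}_p = 1$.

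First I would subtract two instances of criterion~(ii). Taking $B = 0$ gives $\widetilde A\widetilde v = \widetilde\lambda\widetilde v + O(p^M)$, and for an arbitrary $B$ with $B = 0 + O(p^N)$ we get $\widetilde A\widetilde v + B\widetilde v = \widetilde\lambda\widetilde v + O(p^M)$. Since the difference of two vectors each of the form $\widetilde\lambda\widetilde v + O(p^M)$ has norm at most $p^{-M}$ by the ultrametric inequality, we obtain
\[
    \norm{B\widetilde v}_p \leq p^{-M} \qquad \text{for every } B \in M_n(\mbz_p) \text{ with } \norm{B}_p \leq p^{-N}.
\]

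Next I would use $\norm{\widetilde v}_p = 1$: there is an index $i$ with $\abs{\widetilde v_i}_p = 1$, i.e. $\widetilde v_i \in \mbz_p^\times$. Let $B$ be the matrix whose only nonzero entry is $p^N$ in position $(1,i)$. Then $\norm{B}_p = p^{-N}$, so $B$ is an admissible perturbation, while $B\widetilde v = p^N \widetilde v_i\, e_1$ and hence $\norm{B\widetilde v}_p = \abs{p^N\widetilde v_i}_p = p^{-N}$. Substituting into the displayed inequality yields $p^{-N} \leq p^{-M}$, that is, $M \leq N$.

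I do not expect a genuine obstacle here. The only points that deserve a little care are that the symbol $0 + O(p^N)$ ranges over all matrices with entries in $p^N\mbz_p$, so the single perturbation chosen above is legitimate, and that the difference of two right-hand sides of the form $\widetilde\lambda\widetilde v + O(p^M)$ is only controlled up to $p^{-M}$, which is exactly what the argument uses. Conceptually the lemma merely records that, because $\widetilde v$ has a coordinate that is a $p$-adic unit, some perturbation of $\widetilde A$ permitted by the input precision $N$ changes $\widetilde A\widetilde v$ by as much as $p^{-N}$, so no output precision finer than $N$ can be certified.
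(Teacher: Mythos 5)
Your proof is correct and takes essentially the same approach as the paper: both arguments exhibit an admissible perturbation $B$ with $\norm{B\widetilde v}_p = p^{-N}$ and compare against the $B = 0$ instance of criterion~(ii). Your version is slightly cleaner in that it picks a single explicit matrix and avoids the paper's case split on whether $\norm{(A - \widetilde A)\widetilde v}_p$ already equals $p^{-N}$, and it makes the subtraction step explicit where the paper leaves it implicit.
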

	
		\begin{proof}
			If $\norm{(A-\wtilde A)\wtilde v}_p \geq p^{-N}$ then clearly $M \leq N$. Otherwise, choose any $B \in M_n(\mbz_p)$ such that $\norm{B\wtilde v}_p = 1$. Then $\norm{ (A - \wtilde A + p^N B)\wtilde v }_p = p^{-N}\norm{B\wtilde v}_p = p^{-N}$.
		\end{proof}
	
		The following example demonstrates the difference between Version~I and Version~II of the problem.
		
		\begin{example} \label{example: precision is important}
			Let $p=7$, and consider the matrices
				\[
					A := 
					\begin{bmatrix}
						p^3 & 1 \\ 0 & -p^3
					\end{bmatrix},
					\qquad
					B := 
					\begin{bmatrix}
					p^3 & 1 \\ p^6 & -p^3
					\end{bmatrix},
				\]
			Both matrices are rounded to the same result with precision $N := 6$. However, the respective eigenvectors are
			\[
				v_A := \begin{bmatrix} 1 \\ 0 \end{bmatrix}, \qquad v_B := \begin{bmatrix} 1 \\ p^3 \sqrt{2} \end{bmatrix}.
			\]
			Note in $\mbz_7$ that $\sqrt{2} = \pm 4 + O(p)$. We see that the equation $Av_A + O(p^6) = p^3v_A + O(p^6) = Bv_A + O(p^6)$. However, $v_A \neq v_B + O(p^6)$. 
		\end{example}
		
		
		Version~II exhibits pathological behavior whenever the characteristic polynomial of $A$ is not square-free modulo $p^M$. For instance, as $X$ ranges over all $\mbz_p^n$ matrices, we see that the eigenvectors of $I + p^MX$ could be anything. In other words, any solution to Version~II of the problem for $I + p^MX$ has $M \leq 0$. A similar difficulty arises in the eigenvector problem for real matrices. Because of this pathology, we restrict our attention to solving Version~I. 
		
		Of course, we can give analogous formulations for computing a Jordan form of $A$ or for computing a block Schur form for $A$. Generally, we will work with Version~I of these problems as well.

	\subsection{Commentary on the classical algorithm}
		
		The naive algorithm to compute the eigenvalues of a matrix relies on the calculation and factorization of the characteristic polynomial. In the numerical setting over $\mbr$ it is desirable to avoid this step due to the instability of solving for the roots of a polynomial. Also $p$-adically, there is also a loss of precision in solving for the roots of a polynomial. Thus, in order to accurately compute the nullspace of $A - \lambda I$ the roots of the characteristic polynomial, and thus the characteristic polynomial itself, must be known to as much precision as possible. 
		Presently, there are two algorithms to compute the characteristic polynomial at the maximum possible precision, both of which use more than $O(n^3)$ arithmetic operations (asymptotically):
		\begin{enumerate}
			\item 
			Use a division-free calculation. The asymptotic run-time is at least $O(n^4)$.
			\item
			Use the algorithm of \cite{CRV2017characteristic}. Compute the characteristic polynomial of a matrix with inflated precision using division, then truncate to the optimal precision computed from the precision lattice analysis. The runtime is $O(\rho n^3)$, where $\rho$ represents the factor of performing arithmetic at the higher precision. Practically, the computation of the comatrix and the precision increase required for the precision analysis appears to impose a large cost \cite[Remark 5.3]{CRV2017characteristic}.
		\end{enumerate} 
		
		In the next subsection, we discuss how to avoid an expensive computation of the characteristic polynomial for most matrices in $M_n(\mbz_p)$. 
		
		\begin{remark} \label{rem: most meaning}
			To clarify the term ``most'', if $A + O(p^N)$ is an $n \times n$-matrix whose entries are chosen with the uniform probability distribution on $[0, \ldots, p^N-1]$, then the limit as $n \ra \infty$ of the probability that $\chi_{A}$ is square-free is at least $\frac{1-p^{-5}}{1+p^{-3}}$ \cite{Fulman2002random}.
		\end{remark}
	
	\vfill
	\pagebreak
	\subsection{The power iteration algorithm}
		\
		
		\begin{minipage}{\textwidth}
			\bigskip
		\begin{algorithm} \label{algo: eigenvector-main} 
			Eigvecs($A$)
			\begin{algorithmic}[1]
				\REQUIRE $A + O(p^N)$, an $n \times n$-matrix over $\mbz_p$. 
				\ENSURE The eigenpairs $(\lambda_i, v_i)$ of the matrix $A$.
				
				\bigskip
				\IF {$A$ is diagonal }
				\RETURN $\{ (a_{11}, e_1), \ldots , (a_{nn}, e_n) \}$.
				\ENDIF
				
				\IF {$A \mod p \equiv 0$} 
					\STATE $\nu = \min_{i,j} \{ \ord x_{ij} \}$
					\STATE \label{step: recursive call for pA} $V$ = Eigvecs($p^{-v} X$)
					\STATE \label{step: reset prec} Reset precision of $V$
					\RETURN $V$
				\ENDIF

				\bigskip
				\STATE 
				Compute $\chi_{A,p}$
				\IF {$\chi_{A,p}$ has no linear factors}
					\RETURN Nothing.
				\ENDIF		
				
				\IF { $\chi_{A,p} \not \equiv (x-\lambda)^n \pmod p$}
					\STATE $\{ (X_1,V_1), \ldots, (X_r,V_r) \}$ := PowerIterationDecomposition($A$, $\chi_{A,p}$)
					\RETURN Eigvecs($X_i$) for $i=1, \ldots, r$\\
				\ELSE
				\RETURN \label{step: nilpotent block} ClassicalAlgorithm($A$)
				\ENDIF
			\end{algorithmic}
		\end{algorithm}
		\bigskip
		\end{minipage}
		
		Note in step~\ref{step: reset prec}, we may reset the precision since the input matrix on the previous step $X$ was a multiple of $p^\nu$. If $p$ is moderately large, then for most matrices the size of the Jordan blocks modulo $p$ will be small (see Remark~\ref{rem: most meaning}). The square matrix in step~$13$ is expected to be small, so we apply the costly method to compute the characteristic polynomial of this block.

		\begin{minipage}{\textwidth}
		\bigskip
		\begin{algorithm} PowerIterationDecomposition($A$, $\chi_{A,p}$)
			\begin{algorithmic}[1]
				\REQUIRE
				\ \\
				$A + O(p^N)$, an $n \times n$-matrix over $\mbz_p$. \\
				$\chi_{A,p}$, the characteristic polynomial of $A \pmod p$.
				\bigskip
				\ENSURE
					A list of pairs of matrices $\{ (X_i, V_i) \}_i$ such that $A V_i = V_iX_i + O(p^N)$.
				\bigskip
				\STATE
				Compute eigenpairs $(\lambda_i, V_i)$ mod $p$
				\STATE
				Set $m$ to be the largest multiplicity of a linear factor of $\chi_{A,p}$.
				\FOR {$V = V_1, \ldots, V_r$ }
				\STATE 
				\label{step: lift step}
				Lift the eigenpair to an approximate eigenpair $(\lambda_i, V_i)$ in $\mbz_p$
				\STATE
				Set $B = (A - \lambda_i I)$.  
				\STATE
				\textbf{Iterate} for $\log_2 (mN)$ times, $B = B^2$
				\STATE
				Set $V$ := nullspace($B + O(p^N)$).
				\STATE \label{step: solve deflation}
				Solve $VX_i = AV$. 
				\ENDFOR
				\RETURN $\{ (X_1,V_1), \ldots, (X_r,V_r) \}$.
			\end{algorithmic}
		\end{algorithm}
		\bigskip
		\end{minipage}
		
		To clarify, the lifting in step~\ref{step: lift step} is just the trivial operation of interpreting the elements $\{ 0, \ldots, p-1 \}$ as elements of $\mbz_p$. 
		
		\subsection{Proof of correctness}
		
		We give a quick proof of correctness of the power iteration algorithm.
		
		\begin{lemma}
			Let $\lambda$ be an approximate eigenvalue of $A \pmod p$, let $m$ be the multiplicity of $\lambda$ as a root of $\chi_{A,p}$, and let $M$ be the nullspace of $(A-\lambda I )^{Nm}$. Then $M$ is a free $\mbz/p^N\mbz$-module with trivial annihilator. Letting $V$ be a matrix whose columns are the generators of $M$, there exists an $X \in M_N(\mbz_p)$ with such that $AV = VX + O(p^N)$.
		\end{lemma}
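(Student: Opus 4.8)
The plan is to split $\mbz_p^n$ into the part of $A$ attached to $\lambda$ and a complementary summand, by Hensel-lifting the factorization of $\chi_{A,p}$, and then to analyze $A-\lambda I$ on each piece. First I would reduce $\chi_A$ mod $p$ and write $\chi_{A,p}(x) = (x-\bar\lambda)^m\,\bar g(x)$ with $\bar g(\bar\lambda)\ne 0$; the two factors are coprime in $\mbf_p[x]$, so Hensel's lemma produces a factorization $\chi_A = h\cdot g$ in $\mbz_p[x]$ with $h$ monic of degree $m$, $h\equiv (x-\bar\lambda)^m\pmod p$, and $h,g$ coprime in $\mbz_p[x]$. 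Choosing $a,b\in\mbz_p[x]$ with $ah+bg = 1$ and using Cayley--Hamilton, $e := a(A)h(A)$ is an idempotent of $\End_{\mbz_p}(\mbz_p^n)$, which gives an $A$-stable decomposition $\mbz_p^n = W_h\oplus W_g$ into free $\mbz_p$-module direct summands, $W_h = \ker h(A)$ and $W_g = \ker g(A)$. Since the minimal polynomial of $A|_{W_h}$ divides $h$, that of $A|_{W_g}$ divides $g$, and $\chi_{A|_{W_h}}\cdot\chi_{A|_{W_g}} = hg$ with $h,g$ sharing no root, a comparison of multiplicities forces $\chi_{A|_{W_h}} = h$, so $\operatorname{rank}_{\mbz_p} W_h = m$.

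Next I would estimate the two restrictions of $A-\lambda I$. On $W_h$, write $h(x) = (x-\lambda)^m + p\,r(x)$ with $r\in\mbz_p[x]$ (possible since $h\equiv(x-\lambda)^m\pmod p$); from $h(A)|_{W_h} = 0$ we get $(A-\lambda I)^m = -p\,r(A)$ on $W_h$, hence $(A-\lambda I)^{jm}\in p^j\End_{\mbz_p}(W_h)$ by induction, so $(A-\lambda I)^{Nm}$ kills $W_h/p^N W_h$. On $W_g$, reducing mod $p$ gives $\det\big((A-\lambda I)|_{W_g}\big)\equiv \pm\bar g(\bar\lambda)\not\equiv 0\pmod p$, so $A-\lambda I$ is invertible on $W_g$ and $(A-\lambda I)^{Nm}$ is injective on $W_g/p^N W_g$. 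Reducing $e$ mod $p^N$ splits $(\mbz/p^N\mbz)^n = (W_h\bmod p^N)\oplus(W_g\bmod p^N)$, with both summands stable under $(A-\lambda I)^{Nm}$, so its nullspace $M$ is exactly $W_h\bmod p^N$; as the reduction of a rank-$m$ free direct summand (with $m\ge 1$) this is a free $\mbz/p^N\mbz$-module of rank $m$, hence has trivial annihilator.

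Finally, for the existence of $X$, I would note that $M$ is stable under $A$ (which commutes with $A-\lambda I$), so $A$ induces a $\mbz/p^N\mbz$-linear endomorphism of the free module $M$; writing this endomorphism in the basis given by the columns of $\bar V$ and lifting to $\mbz_p$ produces a square matrix $X$ with $AV = VX + O(p^N)$. The hard part will be the bookkeeping in the first two paragraphs: verifying that $e$ is genuinely idempotent and descends to $\mbz/p^N\mbz$ (this is what makes $W_h\bmod p^N$ a true direct summand, so that it is free and $W_h\cap p^N\mbz_p^n = p^N W_h$), and upgrading ``minimal polynomial divides $h$'' to ``$\chi_{A|_{W_h}} = h$'' in order to pin the rank down to $m$. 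The topological-nilpotency step on $W_h$ is routine, but it is where the exponent $Nm$ enters, and it is precisely this bound — $(A-\lambda I)^m$ is already divisible by $p$ on $W_h$ — that justifies the $\log_2(mN)$ repeated squarings in PowerIterationDecomposition.
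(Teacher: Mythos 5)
Your proof is correct, and it takes a somewhat different route from the paper, though the two ultimately rest on the same underlying phenomenon. Where you Hensel-lift the coprime factorization $\chi_{A,p} = (x-\bar\lambda)^m\,\bar g$ to $\chi_A = h\cdot g$ in $\mbz_p[x]$ and build the complementary idempotents $b(A)g(A)$ and $a(A)h(A)$ by hand, the paper instead cites the Hodge--Newton decomposition \cite[Theorem 4.3.11]{Kedlaya2010differential} directly: it conjugates $A - \lambda I$ by some $U \in \GL_n(\mbz_p)$ into block upper-triangular form $\begin{bmatrix} B_{11} & * \\ 0 & B_{22}\end{bmatrix}$ with $B_{11}$ a unit and $B_{22}$ topologically nilpotent, then observes that $(B^m)^N \equiv \begin{bmatrix} B_{11}^{Nm} & * \\ 0 & 0\end{bmatrix} \pmod{p^N}$, whose Smith normal form is $\operatorname{diag}(I,0)$. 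Your idempotent argument is more self-contained (essentially it re-derives the slope-zero piece of the Hodge--Newton decomposition in the case needed) and makes the role of $Nm$ transparent via $(A-\lambda I)^m = -p\,r(A)$ on $W_h$; the paper's route is shorter because it offloads the splitting into a named theorem and reads the module structure directly off the Smith normal form. Two small slips to watch: the idempotent projecting onto $W_h = \ker h(A)$ is $b(A)g(A)$, not $a(A)h(A)$ (the latter projects onto $W_g$); and for the Bezout identity $ah + bg = 1$ to hold with $a,b \in \mbz_p[x]$ you should note that $\operatorname{Res}(h,g)$ is a $p$-adic unit because $\bar h,\bar g$ are coprime over $\mbf_p$. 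Both are easy to fix and do not affect the structure of the argument.
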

		
		\begin{proof}
			
			First, note that $(A-\lambda I) \pmod p$ has $0$ as an eigenvalue with multiplicity $m$. Ordering the singular values and eigenvalues by size, we have that $(A-\lambda I)$ satisfies
				\[
					\abs{\lambda_1 \ldots \lambda_{n-m}} = \abs{\sigma_1\ldots \sigma_{n-m}}_p = 1 \quad \text{ and } \quad \abs{\lambda_{n-m}}_p > \abs{\lambda_{n-m+1}}_p.
				\]
			Thus, by the Hodge-Newton decomposition \cite[Theorem 4.3.11]{Kedlaya2010differential},  there is a matrix $U \in \GL_n(\mbz_p)$ such that
				$B := U^{-1}(A-\lambda I)U$
			is block upper triangular, with the lower right block $B_{22}$ accounting for the $m$ small eigenvalues and the upper left block $B_{11}$ accounting for the unit eigenvalues. Now, we see that $B_{22}^m = 0 \pmod p$, so
				\[
					(B^m)^N =
					\begin{bmatrix}
						B_{11}^{Nm} & * \\
						0 & 0
					\end{bmatrix}
					+ O(p^N).
				\]
			But as $U \in \GL_n(\mbz_p)$, we see that the Smith normal form of $(A-\lambda I)^{Nm}$ is 
					$\begin{bmatrix}
						I & 0 \\
						0 & 0
					\end{bmatrix}.$			
			Thus, the kernel of $(A-\lambda I)^{Nm}$ is free as a $\mbz/p^N\mbz$-module with trivial annihilator in $\mbz/p^N\mbz$. Next, note that we have
				\[
					(A-\lambda I)^{Nm}(AM) = A (A-\lambda I)^{Nm} V = 0
				\]
			so $AM \ssq M$.  Thus $A$ restricted to $M$ defines an endomorphism of $M$, represented by a square matrix.
		\end{proof}
		
		As an immediate corollary, we have:
		
		\begin{corollary} \label{cor: power iteration output}
			Let $(V_1, X_1), \ldots, (V_r,X_r)$ be the blocks returned from power iteration. With $\mathbf{V} := [V_1, \ldots, V_r]$ the horizontal join, we have
				\[
					A \mathbf{V} = \mathbf{V}
					\begin{bmatrix}
						X_1 \\& X_2 \\
						& & \ddots \\
						& & & X_r
					\end{bmatrix}
					+ O(p^N).
				\]
		\end{corollary}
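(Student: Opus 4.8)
The plan is to apply the preceding Lemma to each block and then concatenate; the corollary really is immediate once the bookkeeping is spelled out.

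First I would reconcile the exponent used inside \texttt{PowerIterationDecomposition} with the one appearing in the Lemma. After the iteration $B = B^2$ is run $\lceil \log_2(mN)\rceil$ times starting from $B = A - \lambda_i I$, the matrix in hand is $(A-\lambda_i I)^{2^{\lceil \log_2 mN\rceil}}$, and the exponent $2^{\lceil \log_2 mN\rceil}$ is $\geq mN \geq m_i N$, where $m_i$ denotes the multiplicity of the particular root $\lambda_i$ in $\chi_{A,p}$ and $m$ the maximum such multiplicity. From the block form $B_0 = U^{-1}(A-\lambda_i I)U$ produced by the Hodge--Newton decomposition in the proof of the Lemma --- upper-left block $B_{0,11}$ a unit, lower-right block $B_{0,22} \in M_{m_i}(\mbz_p)$ nilpotent modulo $p$, hence with $B_{0,22}^{k} \equiv 0 \pmod{p^N}$ for all $k \geq m_i N$ --- one reads off that $\ker\big((A-\lambda_i I)^{k} \bmod p^N\big)$ is independent of $k$ for all $k \geq m_i N$, and is a free $\mbz/p^N\mbz$-module of rank $m_i$ with trivial annihilator. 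Hence the matrix $V_i$ obtained from the nullspace computation in the algorithm has columns generating exactly the module $M_i$ of the Lemma, and the Lemma furnishes a square matrix $X_i$ over $\mbz_p$ with $A V_i = V_i X_i + O(p^N)$.

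Second, I would note that step~\ref{step: solve deflation} is well-posed and returns this $X_i$: since the Smith normal form of $(A-\lambda_i I)^{m_i N}$ computed in the Lemma is $\operatorname{diag}(I,0)$, the columns of $V_i$ extend to a $\mbz_p$-basis, so $V_i$ admits a left inverse over $\mbz_p$ and the linear system $V_i X_i = A V_i$ has a solution, unique modulo $p^N$, which is a representative of the matrix supplied by the Lemma. Finally, multiplication by $A$ acts column-block-wise on the horizontal join $\mathbf{V} = [V_1, \ldots, V_r]$, so
\[
  A\mathbf{V} = [AV_1, \ldots, AV_r] = [V_1 X_1, \ldots, V_r X_r] + O(p^N)
  = \mathbf{V}
  \begin{bmatrix} X_1 \\ & \ddots \\ & & X_r \end{bmatrix} + O(p^N),
\]
where the error term for the join is the minimum of the blockwise error terms, hence still $O(p^N)$ by Definition~\ref{def: matrix oh-notation}.

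I do not expect a genuine obstacle here. The only point that is not purely formal --- and the one I would be careful to state explicitly --- is the matching of the implemented exponent $2^{\lceil \log_2 mN\rceil}$ against the exponent $m_i N$ of the Lemma, i.e.\ verifying that the nullspace has actually stabilized for every block before it is read off. Since $m \geq m_i$ for all $i$, the implemented exponent is comfortably large enough, so each $V_i$ indeed spans the stabilized (correct) kernel, and the rest is block-matrix algebra together with the flat-precision convention of Definition~\ref{def: matrix oh-notation}.
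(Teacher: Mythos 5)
Your proposal is correct and matches the paper's (implicit) argument: the paper labels this ``an immediate corollary'' of the preceding lemma and gives no written proof, and your argument is exactly the routine bookkeeping that makes it immediate --- apply the lemma block by block, then concatenate. The extra care you take in reconciling the algorithm's exponent $2^{\lceil \log_2 mN\rceil}$ (using the \emph{maximum} multiplicity $m$) with the lemma's exponent $m_i N$ is a legitimate and worthwhile detail, since the lemma's proof indeed goes through unchanged for any exponent $\geq m_i N$.
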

	
	
		We hope that step~\ref{step: nilpotent block} can be improved, as we believe it is helpful in solving polynomial systems whose solutions have large valuations. Thus, we raise the question:
		
		\begin{question}
			Given a topologically nilpotent matrix $B \in M_n(\mbz_p)$, is there an efficient iterative strategy to determine the eigenvectors of $B + O(p^N)$?
		\end{question}

		\subsection{The Schur form algorithm}
				
		Solving for the nullspace at the end of the iteration is the dominant cost in the power iteration algorithm. In the classical numerical setting, we can avoid this drawback by using the $QR$-iteration algorithm. We also have a $p$-adic version of the $QR$-algorithm. With the $p$-adic $QR$ step replacing the usual $QR$ factorization, our algorithm is simply the original $LR$-algorithm \cite{rutishauser1958eigenvalue} that inspired the $QR$-algorithm for real matrices. The $LR$-algorithm fell out of favor some time in the 1960's since it is numerically less stable than the $QR$-algorithm, but $p$-adically this situation is reversed!
		
		
		We now give the algorithm.
		
		\begin{minipage}{\textwidth}
			\bigskip
			
			\begin{algorithm} QR-Iteration($A$, $\chi_{A,p}$)
				\begin{algorithmic}[1]
					\REQUIRE
					\ \\
					$A + O(p^N)$, an $n \times n$-matrix over $\mbz_p$. \\
					$\chi_{A,p}$, the characteristic polynomial of $A \pmod p$.
					\bigskip
					
					\ENSURE
					A (block) triangular form $T$ for $A$, and matrix $V$ such that $AV = VT + O(p^N)$. 
					
					\bigskip
					\STATE
					Set $\lambda_1, \ldots, \lambda_\ell$ to be the roots of $\chi_{A,p}$ in $\mbf_p$, lifted to $\mbz_p$.
					\STATE
					Set $m_1, \ldots, m_\ell$ to be the multiplicities of the roots of $\chi_{A,p}$.
					
					\STATE
					Compute $B,V$ such that $AV = VB$ and $B$ is in Hessenberg form.
					
					\FOR {$i= 1, \ldots, \ell$}
					\FOR {$j= 1, \ldots , m_iN$}
					\STATE
					Factor $(B-\lambda_i I) = QR$
					\STATE
					Set $B := RQ + \lambda_i I$
					\STATE
					Set $V := Q^{-1}V$						
					\ENDFOR
					\ENDFOR
					\RETURN $B, V$.
				\end{algorithmic}
			\end{algorithm}
			\bigskip
		\end{minipage}

		\pagebreak
	
		\begin{minipage}{\textwidth}
			\bigskip
			\begin{algorithm} \label{algo: schur-main} 
				BlockSchurForm($A$)
				\begin{algorithmic}[1]
					\REQUIRE
						$A + O(p^N)$, an $n \times n$-matrix over $\mbz_p$.
					\ENSURE 
						A block upper triangular matrix $T$ with $\ell+1$ distinct blocks and matrix $V$ such that $AV = VT + O(p^N)$. Here, $\ell$ is the number of linear factors of $\chi_{A,p} = \chi_A \pmod p$.
						
					\bigskip
					\IF {$A$ is diagonal }
					\RETURN $A$, $\{ e_1, \ldots , e_n \}$.
					\ENDIF
					
					\IF {$A \mod p \equiv 0$} 
					\STATE $\nu = \min_{i,j} \{ \ord_p x_{ij} \}$
					\STATE \label{step: schur recursive call for pA} $V$ = BlockSchurForm($p^{-v} X$)
					\STATE \label{step: schur reset prec} Reset precision of $V$
					\RETURN $V$
					\ENDIF
					
					\bigskip			
					\STATE 
					Compute $\chi_{A,p} := \chi_A \pmod p$
					\IF {$\chi_{A,p}$ has no linear factors}
						\RETURN $A$, $\{ e_1, \ldots , e_n \}$
					\ENDIF
					
					\bigskip					
					\IF { $\chi_{A,p} \neq (x-\lambda)^n$}						
						\RETURN $B,V$ := QR-Iteration($A$, $\chi_{A,p}$) 
					\ELSE {}
						\STATE
						$B,V$ := $A$, $\{ e_1, \ldots , e_n \}$
					\ENDIF
					
					\bigskip
					\FOR {$j := 1 \ldots \ell$}
						\IF { $\chi_{B_{ii},p} \equiv (x-\lambda)^m$ for $m > 1$ }
							\STATE \label{step: schur nilpotent block}
							$C_j, X_j$ :=  ClassicalAlgorithm($B_{j}$)
							\STATE
							Update $B$ and $V$
						\ENDIF
					\ENDFOR	
					\RETURN $A,V$				
				\end{algorithmic}
			\end{algorithm}
		\end{minipage}

		We briefly comment on some aspects of this algorithm analogous to the archimedean setting (the proofs for the various statements are analogous as well). Note that each $p$-adic $QR$-iteration preserves the Hessenberg form, so the cost of applying the $QR$-step is $O(n^2)$. From the discussion of the $LR$-algorithm in \cite[Sections 5-9]{wilkinson1965convergence}, we see that the subdiagonal entry $(i,i+1)$ converges to $0$ at a rate of $O\left(\frac{\abs{\lambda_{i+1}}_p^s}{\abs{\lambda_i}_p^s} \right)$, provided that $\abs{\lambda_{i+1}}_p < \abs{\lambda_i}_p$. For $B \in M_n(\mbz_p)$, if $\abs{\lambda_{i+1}}_p < \abs{\lambda_i}_p = 1$, then $\frac{\abs{\lambda_{i+1}}_p}{\abs{\lambda_i}_p} < p^{-\frac{1}{m}}$, where $m$ is the largest multiplicity of an eigenvalue of $B \pmod p$. Additionally, if some subdiagonal entries are identified to be zero during the iteration, the standard deflation strategies can be used to accelerate the algorithm.
				

		\begin{remark}
			In our version of the iteration we use a constant shift factor, and so the iteration converges in at most $m N$ steps \cite{wilkinson1965convergence}. We believe that a $p$-adic analogue of the Rayleigh quotient strategy to refine the eigenvalue approximation will lead to a drastic speed-up in convergence.
		\end{remark}
		
		\begin{remark}
			It is possible to use the $QR$-iteration to compute the valuations of the eigenvalues iteratively. After applying a single round of the $QR$-iteration to a Hessenberg matrix $A$ with no shift (i.e, $\lambda_i=0$), the valuations of the eigenvalues can be read from the diagonal blocks in the resulting matrix. If $B_{ii}$ is one of the diagonal blocks with no subdiagonal entry equal to $0$, then all of the eigenvalues of $B_{ii}$ must have the same valuation, as otherwise some of the subdiagonal entries of $B_{ii}$ would have converged to $0$. In fact, the valuations of the eigenvalues of $A$ can be computed this way even if some of the eigenvalues are not defined over $\mbq_p$.
		\end{remark}

		\subsection{Complexity analysis}
		
		We denote by $M(n)$ the cost of matrix multiplication.
		
		\begin{proposition}
			Let $N$ be the precision and let $m$ be the largest size of a Jordan block of $A \pmod p$. Then the number of $\mbq_p$-arithmetic operations performed by Algorithm~\ref{algo: eigenvector-main} is as most
			\[
				O\left( \rho m^3 + \ell (n^3 + M(n) \log_2(mN)) \right).
			\]
		\end{proposition}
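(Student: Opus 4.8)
The plan is to trace Algorithm~\ref{algo: eigenvector-main} branch by branch, bound the work done in each branch, and charge each piece to one of the three summands of the claimed estimate, while keeping careful track of how the recursive calls unfold ($\ell$ denoting the number of distinct roots of $\chi_{A,p}$ in $\mbf_p$). First I would record two structural observations that control the recursion. If the branch $A \equiv 0 \pmod p$ is taken, then $p^{-\nu}X$ has minimal entry valuation $0$, so the recursive call is not again of this form; this branch is thus entered at most once along any root-to-leaf path, ``Reset precision'' is free, and the branch costs only $O(n^2)$ per occurrence. If \texttt{PowerIterationDecomposition} is called, then by the lemma preceding Corollary~\ref{cor: power iteration output} each returned block $X_i$ has $\chi_{X_i,p}=(x-\lambda_i)^{m_i}$ for a single linear factor; hence the recursive call \texttt{Eigvecs}($X_i$) either finds $X_i$ diagonal, or (after at most one division by a power of $p$) lands directly in the \texttt{ClassicalAlgorithm} branch, and never re-enters \texttt{PowerIterationDecomposition}. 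So the recursion tree has bounded depth and it suffices to add up the work at the nodes. The cheap steps are then disposed of quickly: detecting diagonality and computing $\nu=\min_{i,j}\ord x_{ij}$ cost $O(n^2)$; computing $\chi_{A,p}$ and factoring it over $\mbf_p$ costs $O(n^3)$ field operations, charged to the $\ell n^3$ term; and if $\chi_{A,p}$ has no linear factors the algorithm returns here.

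Next I would bound \texttt{PowerIterationDecomposition}. Computing the $r=\ell$ mod-$p$ eigenspaces $(\lambda_i,V_i)$ is $\ell$ nullspace computations over $\mbf_p$, costing $O(\ell n^3)$. For each of the $\ell$ eigenvalues the loop performs $\ceil{\log_2(mN)}$ squarings of an $n\times n$ matrix over $\mbz/p^N\mbz$, each costing $M(n)$, for a total of $O(\ell M(n)\log_2(mN))$; the exponent $2^{\ceil{\log_2(mN)}}\ge mN$ is exactly what the lemma preceding Corollary~\ref{cor: power iteration output} requires, so the step is correct. For each eigenvalue the loop also computes one nullspace over $\mbz/p^N\mbz$ and solves the linear system $VX_i=AV$ for the $m_i\times m_i$ matrix $X_i$, each $O(n^3)$, for a total of $O(\ell n^3)$. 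Hence this routine costs $O(\ell(n^3+M(n)\log_2(mN)))$.

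Finally I would account for \texttt{ClassicalAlgorithm}, which is invoked only on (sub)matrices all of whose eigenvalues are congruent modulo $p$: on $A$ itself when $\chi_{A,p}=(x-\lambda)^n$ (so that the Jordan-block parameter is $m=n$), or on the blocks $X_i$ of sizes $m_i\le m$. By the discussion in the ``Commentary on the classical algorithm'' subsection (the algorithm of \cite{CRV2017characteristic} run at inflated precision followed by nullspace computations), \texttt{ClassicalAlgorithm} on a $d\times d$ matrix uses $O(\rho d^3)$ operations. Summing over the at most $\ell$ blocks, with $m_i\le m$ and $\sum_i m_i\le n$, and absorbing the residue into the $\ell n^3$ term, this contributes $O(\rho m^3)$. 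Collecting the contributions of all branches yields
\[
O\!\left(\rho m^3+\ell\bigl(n^3+M(n)\log_2(mN)\bigr)\right),
\]
as claimed.

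The step I expect to be the main obstacle is precisely this last accounting: confirming that the recursion genuinely collapses after a single ``split'' step with the per-node costs above (i.e.\ that no block $X_i$ re-enters \texttt{PowerIterationDecomposition}), and verifying that the sum $\sum_i\rho m_i^3$ of classical-algorithm costs over the blocks is correctly subsumed by $O(\rho m^3)$ — this is where one must pin down the intended reading of $m$, since a priori $\sum_i m_i^3$ is only bounded by $m^2\sum_i m_i\le m^2 n$.
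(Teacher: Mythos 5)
Your approach is essentially the paper's: tabulate the cost of each branch and charge it to the appropriate summand, with the additional care (which the paper's table omits) of explicitly verifying that the \texttt{PowerIterationDecomposition} blocks $X_i$ have $\chi_{X_i,p}=(x-\lambda_i)^{m_i}$ and therefore the recursion flattens to depth roughly two. Your bound of $O(\ell(n^3+M(n)\log_2(mN)))$ for \texttt{PowerIterationDecomposition} is consistent with the proposition statement; the paper's internal tabulation actually writes its ``Total'' as $O(\ell n^3 + M(n)\log_2 mN)$, dropping an $\ell$ in front of the $M(n)$ term even though the loop body is executed $\ell$ times, so you are being more faithful to the claimed bound than the paper's own table.

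The concern you raise at the end is real and is not resolved by the paper either. When $\chi_{A,p}$ splits into several linear factors with multiplicities $m_1,\ldots,m_r$, each block $X_i$ with $m_i>1$ triggers a call to \texttt{ClassicalAlgorithm} at cost $O(\rho m_i^3)$, and the total is $\sum_i \rho m_i^3$. With $\sum_i m_i\le n$ and $m_i\le m$ one only gets $\sum_i m_i^3\le m^2 n$ (or, counting blocks, $\le \ell m^3$), neither of which is $O(m^3)$. The paper's table simply enters ``$O(\rho m^3)$'' on the classical-algorithm line without summing over the recursive calls, so the discrepancy is in the source as well. In other words: your proof is correct up to exactly the point where the proposition's own statement is imprecise, and the honest bound the tabulation yields is $O(\rho\sum_i m_i^3 + \ell(n^3+M(n)\log_2(mN)))$ with $\sum_i m_i^3\le\min(\ell m^3,\,m^2 n)$, which collapses to the stated $O(\rho m^3+\cdots)$ only when at most one eigenvalue has multiplicity greater than $1$ modulo $p$ (the typical case under the random-matrix heuristic of the preceding remark, but not guaranteed).
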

	
		\begin{proof}
		
		We tabulate the costs in a table.
		\begin{center}
		\begin{tabular}{l|l|l}
			Line(s) & Cost per line \\ \hline \hline			
			Eigvecs & \\ \hline 
			3 to 7 & Negligible \\
			8 & $O(M(n) (\log n)(\log \log n) )$ & Finite field operations. \\ \hline 
			12 & $O( \ell n^3 + M(n) \log_2 mN)$ & Power iteration \\
			15 & $O(\rho m^3)$ & Classical algorithm \\
			Total: & $O( \ell n^3 + \rho m^3 + M(n) \log_2 mN)$ \\ \\ \hline \hline
			
			Power iteration decomposition & \\  \hline
			1 & $O(n^3)$ & Finite field operations \\
			3 & $\ell$ iterations \\
			-- 5 & $O( M(n) \log_2 mN)$ \\
			-- 6 & $O(n^3)$ \\
			7 & $O(m^3)$ \\ \hline
			Total: & $O( \ell n^3 + M(n) \log_2 mN)$. 		
			
		\end{tabular}
		\end{center}
	
		\end{proof}
	
		The $\rho m^3$ term corresponds to blocks which cannot be decomposed using power iteration, i.e, where the classical algorithm is invoked. Note that the cost of computing $\chi_A \pmod p$ is negligible, since over a finite field the characteristic polynomial can be computed in $O(M(n)(\log n)(\log \log n))$ finite field operations \cite{Storjohann2001frobenius}.
	
		If the linear factors occur with small multiplicities mod $p$, our algorithm significantly outperforms the naive strategy. If all linear factors mod $p$ occur with multiplicity $1$ the classical algorithm is never called, so the asymptotic run-time simplifies to ${O(n^3 + \ell M(n) \log_2(N))}$. The worst-case scenario of the algorithm is if $\chi_A \pmod p \equiv (x - \alpha)^n$, in which case the default algorithm is called immediately; the only wasted time in this case is the inexpensive computation of $\chi_A \pmod p$.
		
		We now study the complexity of the block Schur form algorithm.
		
		\begin{proposition}
			Let $N$ be the precision, let $m$ be the largest size of a Jordan block of $A \pmod p$, and let $\ell$ be the number of linear factors of $\chi_A \pmod p$. Then the block Schur form can be computed in $O(n^3 + \ell  mN n^2 + \rho m^3)$ $\mathbb{Q}_p$-arithmetic operations.
		\end{proposition}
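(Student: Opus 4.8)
My plan is to tabulate the cost of Algorithm~\ref{algo: schur-main} and of its subroutine QR-Iteration line by line, exactly as in the proof of the previous proposition, grouping the steps so that the three groups account for the three summands $n^3$, $\ell m N n^2$, and $\rho m^3$. Write $\lambda_1,\dots,\lambda_\ell\in\mbf_p$ for the roots of $\chi_{A,p}$ and $m_1,\dots,m_\ell$ for their multiplicities; note $m_i\le m$ for every $i$. The preprocessing steps are cheap: the diagonality test, the ``no linear factor'' branch, and each precision reset cost $O(n^2)$; the branch $A\equiv 0\pmod p$ triggers at most one nested call, since $p^{-\nu}X$ has a unit entry mod $p$ by the choice of $\nu$; computing $\chi_{A,p}$ over $\mbf_p$ costs $O(M(n)(\log n)(\log\log n))$ finite field operations \cite{Storjohann2001frobenius}, which is absorbed into $n^3$; and reducing to Hessenberg form by $O(n)$ norm-pivoted elementary similarity transformations, each $O(n^2)$, costs $O(n^3)$ and produces $V$ with $AV=VB+O(p^N)$.

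For the QR-iteration, the essential observation (the $p$-adic counterpart of \cite[Sections~5--9]{wilkinson1965convergence}, already recorded in the discussion above) is that a single $p$-adic $QR$-step on a Hessenberg matrix costs only $O(n^2)$. Indeed the $p$-adic $QR$-factorization of $B-\lambda_i I$ is a norm-pivoted $PLU$-factorization, and for a Hessenberg matrix this means clearing the $n-1$ subdiagonal entries one at a time, each by a single row operation on two \emph{adjacent} rows (the pivot search compares only a pivot with the entry just below it, so it only ever swaps adjacent rows); hence $Q$ and $Q^{-1}$ are each a product of $O(n)$ elementary and transposition matrices. Forming $B:=RQ+\lambda_i I$ then costs $O(n^2)$ and leaves $B$ Hessenberg, and the update $V:=Q^{-1}V$ also costs $O(n^2)$. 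For the block attached to $\lambda_i$, the eigenvalues of $B-\lambda_i I$ that vanish mod $p$ lie on a Newton-polygon segment of slope at least $1/m_i$, so the relevant subdiagonal ratios are at most $p^{-1/m_i}$ and the block deflates after $m_i N$ steps; this is the loop bound in QR-Iteration. Summing over $i$, QR-Iteration costs $O\!\left(n^3+n^2 N\sum_i m_i\right)=O(n^3+\ell m N n^2)$.

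After QR-Iteration there are $\ell+1$ diagonal blocks, the $i$-th of size $m_i$; for each block with $\chi_{B_{ii},p}=(x-\lambda)^{m_i}$ and $m_i>1$ we call ClassicalAlgorithm on an $m_i\times m_i$ matrix at cost $O(\rho m_i^3)$, and patching the result back into $B$ and updating the corresponding $m_i$ columns of $V$ costs $O(nm_i)\le O(n^2)$. The degenerate branch $\chi_{A,p}=(x-\lambda)^n$ is just the case $\ell=1$ with a single block of size $n=m$. Summing over the (at most $\ell$) nilpotent-type blocks and using $m_i\le m$ contributes $O(\rho m^3)$, and the $O(n^2)$ patch-ups are absorbed into the $n^3$ term. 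Adding the three groups gives $O(n^3+\ell m N n^2+\rho m^3)$, as claimed.

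The main obstacle is the $O(n^2)$-per-step bound for the $p$-adic $QR$-iteration: one has to verify that norm-based pivoting on a Hessenberg matrix never disturbs the Hessenberg structure (it only ever exchanges adjacent rows), so that both the factorization step and the accumulation $V:=Q^{-1}V$ stay quadratic, and that the constant-shift iteration actually converges within $m_i N$ steps, which is the $p$-adic Newton-polygon analogue of the subdiagonal-decay estimate in \cite{wilkinson1965convergence}. Everything else is a routine bookkeeping of the table above.
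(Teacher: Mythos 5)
Your proof follows the paper's approach: tabulate the per-line cost of BlockSchurForm and of the QR-Iteration subroutine, identify the dominant contributions, and add. The extra justifications you supply --- that norm-pivoting on a Hessenberg matrix can only swap adjacent rows (so $Q$ is a product of $O(n)$ elementary matrices and each $QR$-step together with the accumulation $V:=Q^{-1}V$ stays $O(n^2)$), and that the block for $\lambda_i$ deflates within $m_iN$ steps because the Newton polygon forces a subdiagonal decay rate of at most $p^{-1/m_i}$ --- are exactly what the paper's table records tersely as ``(parallel with step 6 row operations)'' and as the $m_iN$ loop bound, so they fill in implicit steps rather than take a different route. Your totals match the paper's.

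One imprecision, shared with the paper's own table, is worth flagging. You assert that ``summing over the (at most $\ell$) nilpotent-type blocks and using $m_i \le m$ contributes $O(\rho m^3)$,'' but this inference does not follow: $m_i\le m$ only yields $\sum_i \rho m_i^3 \le \ell\rho m^3$, and the sharper bound from $\sum_i m_i\le n$ is $\rho m^2 n$, not $\rho m^3$. The paper's tabulation has the same gap --- it puts $O(\rho m^3)$ on the ClassicalAlgorithm line inside a loop declared to run $\ell$ times, and the factor of $\ell$ silently disappears from the total. So your write-up is faithful to the paper's argument as given; to make the bound airtight one would either replace the last summand with $\rho\sum_i m_i^3$ (hence $O(\rho m^2 n)$), or add the hypothesis that only a bounded number of the linear factors of $\chi_{A,p}$ are repeated, which is the ``most matrices'' regime the paper has in mind but is not a hypothesis of the proposition as stated. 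Similarly, your estimate of $O(nm_i)$ for the update of $B$ and $V$ is a bit low (it is $O(nm_i^2)$, as the paper records), though this too is absorbed into $n^3$.
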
 
	
		\begin{proof}
				We tabulate the costs in a table.
			
			\begin{center}
			\begin{tabular}{l|l|l}
				Line(s) & Cost per line  \\ \hline \hline			
				BlockSchurForm & \\ \hline
				8 to 11 & Negligible \\
				12 & $O(n^3 + \ell mN n^2)$ & $QR$-iteration \\
				15 & $\ell$-iterations \\
				-- 17 & $O(\rho m^3)$ & Classical algorithm \\
				-- 18 & $O(n m^2)$ \\ 
				Total: & $O(\rho m^3 + n^3 + \ell mN n^2)$ \\ \\ \hline \hline
				
				$QR$-Iteration. & \\ \hline
				1 & $O(M(n)(\log n)(\log \log n))$ & Finite field operations \\
				2 & $O(1)$ \\
				3 & $O(n^3)$ & Hessenberg form\\
				4 & $\ell$-iterations \\
				-- 5 & $N$-iterations \\
				-- -- 6 & $O(n^2)$ \\
				-- -- 7 & $O(n^2)$ & (parallel with step $6$ row operations) \\
				-- -- 8 & $O(n^2)$ & (parallel with step $6$ row operations) \\
				Total: & $O(n^3 + \ell m Nn^2)$

			\end{tabular}
		\end{center}
		\end{proof}

	\subsection{$p$-adic Householder reflections} \label{subsec: householder}
	
	We close the section with a quick observation about Householder reflections.

	\begin{lemma} \label{lem: p-adic householder}
		Let $p$ be an odd prime. Let $x \in \mbz_p^n$ be a vector with exactly one coordinate with minimal valuation $r$. We further assume this coordinate is not $x_1$. Let $e_1$ be the first standard basis vector. Then:
		\begin{enumerate}[(a)]
			\item
			$x^Tx$ is a non-zero square in $\mbz_p$.
			\item
			Let $\alpha := \sqrt{x^Tx} \in \mbz_p$ be one of the square roots and let $v' := x - \alpha e_1$. Choose (the unique) $v \in \mbz_p^n$ such that $v' = p^rv$. Then $\abs{v^Tv}_p = 1$, and the Householder transformation
			\[
			H := I - \frac{2}{v^Tv} vv^T
			\]
			is a well-defined reflection. In particular $H \in \Ogroup_n(\mbz_p)$.
			\item
			$Hx = \alpha e_1$.
		\end{enumerate}
	\end{lemma}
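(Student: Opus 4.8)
The plan is to strip off the common factor $p^r$ and reduce to a vector with a single unit coordinate. Write $x = p^r y$ with $y \in \mbz_p^n$; the hypothesis that exactly one coordinate attains the minimal valuation $r$ says that exactly one entry $y_j$ lies in $\mbz_p^\times$ while every other entry lies in $p\mbz_p$, and the extra hypothesis $j \neq 1$ gives $y_1 \in p\mbz_p$. For part (a), $x^T x = p^{2r}\, y^T y$, and modulo $p$ the sum $y^T y$ collapses to $y_j^2$, a nonzero square in $\mbf_p^\times$. Since $p$ is odd, a unit of $\mbz_p$ whose reduction mod $p$ is a square is itself a square (Hensel), so $y^T y = \beta^2$ for some $\beta \in \mbz_p^\times$, and therefore $x^T x = (p^r\beta)^2$ is a nonzero square.

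For part (b), take $\alpha = p^r\beta$. Then $v' = x - \alpha e_1 = p^r(y - \beta e_1)$, so $v = y - \beta e_1$ (the division by $p^r$ being forced since $\mbz_p$ is a domain), and $v \in \mbz_p^n$. A short expansion using $y^T y = \beta^2$ gives $v^T v = y^T y - 2\beta y_1 + \beta^2 = 2\beta^2 - 2\beta y_1 = 2\beta(\beta - y_1)$; here $2$ is a unit because $p$ is odd, $\beta$ is a unit, and $\beta - y_1$ is a unit because $y_1 \in p\mbz_p$. Hence $\abs{v^T v}_p = 1$, so $\frac{2}{v^T v} v v^T$ has entries in $\mbz_p$ and $H \in M_n(\mbz_p)$; in particular the formula for $H$ makes sense. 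The standard one-line identities $H^T = H$ and $H^2 = I$ (the cross terms cancel precisely because of the choice of scalar $\frac{2}{v^T v}$) show $H^T H = I$, so $H \in \Ogroup_n(\mbz_p)$, and since $H$ fixes the hyperplane $v^\perp$ pointwise and sends $v \mapsto -v$ it is a reflection.

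For part (c), compute $v^T x = p^r\, v^T y = p^r(y^T y - \beta y_1) = p^r\beta(\beta - y_1)$, so that $\frac{2\, v^T x}{v^T v} = p^r$ and $Hx = x - \frac{2 v^T x}{v^T v} v = p^r y - p^r(y - \beta e_1) = \alpha e_1$. There is no deep obstacle here: the only genuine input is the Hensel step in (a), which is where oddness of $p$ is used, together with the bookkeeping of which entries are units and which lie in $p\mbz_p$. I would stress that both hypotheses enter essentially: ``exactly one coordinate of minimal valuation'' is what forces $y^T y$ to be a \emph{unit} (rather than merely of even valuation, which would not suffice for extracting a square root in $\mbz_p$ or for the norm claim), and ``that coordinate is not $x_1$'' is what makes $\beta - y_1$ a unit and hence $v^T v$ a unit, which is what keeps $H$ integral and orthogonal.
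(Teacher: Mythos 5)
Your proof is correct and follows essentially the same route as the paper's: factor out $p^r$, use Hensel's lemma on the unit coordinate to get the square root in (a), show $v^Tv = 2\beta(\beta - y_1)$ is a unit using both hypotheses, and verify the Householder identities. Your version is slightly cleaner in that you work entirely with $y$ and $\beta$ rather than mixing $x_i$ with $O(p^{r+1})$ bookkeeping, and you write out the computation for (c) where the paper declares it clear; the only cosmetic point is that you should say explicitly that $\beta$ is chosen so that $\alpha = p^r\beta$ (if $\alpha$ is the other square root, replace $\beta$ by $-\beta$, and the argument goes through unchanged).
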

	
	\begin{proof}
		\begin{enumerate}[(a)]
			\item
			Assume $x_i$ is the coordinate with minimal valuation. Write $x = p^ry$. We have that $x^Tx = p^{2r}(y^Ty) = p^{2r}(y_i^2 + O(p))$. As $y_i$ is a unit, we see by Hensel's lemma that this is a square.
			
			\item
			Assume again $x_i$ is the coordinate with minimal valuation. Note that
				$\alpha^2 = \sum_{j=1}^n x_j^2$,
			so as $x_i$ is the unique coordinate with minimal valuation, we have $\alpha = x_i + O(p^{r+1})$. Now
			\begin{align*}
				p^{2r}v^Tv &= (x^Tx - 2\alpha x^Te_1 + \alpha^2) \\
				&= 2( x^Tx - (x_i + O(p^{r+1}))x_1) \\
				&= 2( x_i^2 - x_i x_1 ) + O(p^{2r+1}) \\
				&= 2x_i( x_i - x_1 ) + O(p^{2r+1})
			\end{align*}
			By our assumption on $x$, we have $x_i-x_1 = x_i + O(p^{r+1})$. Thus, $v^Tv$ is a unit and $H$ is well-defined. That it is a reflection follows from the usual calculation, which is
			\begin{align*}
				HH &= \left(I - \frac{2}{v^Tv} vv^T \right)\left(I - \frac{2}{v^Tv} vv^T \right) \\
				&= I - \frac{4}{v^Tv} vv^T + \frac{4}{ (v^Tv)^2 } (vv^T)(vv^T) \\
				&=  I - \frac{4}{v^Tv} vv^T + \frac{4}{ (v^Tv)} (vv^T) \\
				&= I.
			\end{align*}
			Finally, it is clear that all of the entries of $H$ are integral.
			\item
			The result is clear from direct calculation.
		\end{enumerate}
	\end{proof}

	We now offer a Householder version of the Hessenberg algorithm. Our version of the Hessenberg algorithm is more a theoretical curiosity; for our practical implementations, we use  \cite[Algorithm 1]{CRV2017characteristic} as it appears to be more efficient.
	
	\begin{minipage}{\textwidth}
		\bigskip
		\begin{algorithm} \label{algo: hessenburg} 
			HessenbergForm($A$)
			\begin{algorithmic}[1]
				\FOR { $j=1,\ldots, (n-1)$}
					\IF{ The $j$-th subdiagonal is 0}
						\STATE
							\textbf{continue}
					\ENDIF
					\STATE
						Swap the last (i.e, $n$-th) row with row $i$ to ensure $a_{nj}$ has the minimal valuation of the sub-diagonal elements
					\STATE
						Add multiples of the last ($n$-th row) to the $i$-th row, for $j < i < n$, to ensure $a_{nj}$ is the unique sub-diagonal element with minimal valuation
					\STATE
						Apply the corresponding column operations to preserve similarity. None of these operations affect columns $1, \ldots, j$				
					\STATE
						Apply the Householder reflection to set the $j$-th subcolumn to a multiple of $e_1$.
				\ENDFOR
			\end{algorithmic}
		\end{algorithm}
		\bigskip
	\end{minipage}

\section{Polynomial system solving} \label{sec: solve equations}

	\newcommand{\N}{\mathcal{N}}
	\newcommand{\Res}{\mathrm{Res}}
	\newcommand{\Span}{\mathrm{Span}}
	\newcommand{\id}{\mathrm{id}}

	In this section, we solve 0-dimensional systems of polynomial equations over $\mbq_p$. The general method is based on the truncated normal form solver of \cite{TMV2018truncated}. We begin by reviewing some of their terminology. Throughout, we let $R := \mbq_p[x_1, \ldots, x_n]$ and let $I$ be an ideal such that $d = \dim_{\mbq_p} R/I < \infty$.

	\begin{definition}
		A \emph{normal form} on $R$ with respect to $I$ is a linear map $\N : R \ra B$ where $B \ssq R$ is a $\mbq_p$-vector subspace of dimension $d$ such that the sequence
		\[
			\xym{ 0 \ar[r] & I \ar[r] & R \ar[r]^\N & B \ar[r] & 0 }
		\]
		is exact, and $\N |_B = \id_B$.
	\end{definition}

	Consider the canonical exact sequence
	\[
		\xym{ 0 \ar[r] & I \ar[r] & R \ar[r]^\pi & R/I \ar[r] & 0 }.
	\]
	Let $s\: R/I \ra R$ be a section of $\mbq_p$-vector spaces and let $B$ be the image of $R/I$. We can construct a normal form by setting $\N := \pi \circ s$, giving the commutative diagram
	\[
	\xym{
		& & B \ar@{^(->}[d] & \\
		0 \ar[r] & I \ar[r]^-\iota & R  \ar[r]^-\pi &  R/I \ar[r] \ar[ul]_-s & 0.
	}
	\]
	Vice-versa, any normal form $\N$ defines a section of $\pi$. 
	
	\bigskip
	Following \cite{TMV2018truncated}, there is no need to compute an entire normal form for our purposes, and we can often restrict to a finite dimensional subspace.
	
	\begin{definition}
		Let $B \ssq V \ssq R$ with $B, V$ finite dimensional $\mbq_p$-vector subspaces such that $x_i B \ssq V$ for all $i = 1, \ldots, n$, and $\dim_{\mbq_p} B = \dim_{\mbq_p} (R/I)$. A \emph{Truncated Normal Form (TNF)} on $V$  with respect to $I$ is a linear map $\N : V \ra B$ such that $N$ is the restriction to $V$ of a normal
		form with respect to $I$. That is, the sequence
		\[
			\xym{0 \ar[r] & I \cap V \ar[r]^-\iota & V  \ar[r]^-\N &  B \ar[r]  & 0}
		\]
		is exact, and $\N |_B = \id_B$.
	\end{definition}

	Similar to normal forms, from a commutative diagram of $\mbq_p$-vector spaces
	\[
	\xym{
		& & B \ar@{^(->}[d] & \\
		0 \ar[r] & I \cap V \ar[r]^-\iota & V  \ar[r]^-\pi &  V/I \cap V \ar[r] \ar[ul]_-s & 0
	}
	\]
	we obtain a truncated normal form by $\N = s \circ \pi$, where $s$ is a section of $\pi$ and $x_iB \ssq V$ for all $i$.
	
	One way to construct a truncated normal form is via resultant matrices.	
	\begin{definition}
		Let $k$ be a field and let $f_1, \ldots, f_r \in k[x_1, \ldots, x_n]$ be polynomials. Let $D$ be a positive integer and let $d_i := \deg f_i$. A \emph{resultant matrix} is a $k$-linear map of the form
			\[
			\begin{tabu}{rccc}
				\Res\:& V_{D-d_1} \times \ldots \times V_{D-d_r} &\ra& V_D \\
					& (q_1, \ldots, q_r) &\mapsto& f_1q_1 + \ldots + f_rq_r
			\end{tabu}
			\]
		where $V_d$ is the $k$-vector space of polynomials of degree at most $d$. (In particular, $V_{-n} = \{0\}$ for $n$ a positive integer.) We denote the corresponding matrix as $[\Res_{ij}]$.
	\end{definition}
	
	Note that the image of the resultant map is contained in $V_D \cap I$. If $D$ is sufficiently large we obtain a truncated normal form. (The result below is quite old, and originally due to Macaulay.)
	
	\begin{proposition}
		If $D = \sum_{i=1}^{r} (d_i-1) + 1$, then $\Im \Res = I \cap V_D$. In particular, the canonical quotient $\pi\: V_D \ra V_D/(V_D \cap I)$ is represented by the matrix  $(\ker [\Res_{ij}]^T)^T$.
	\end{proposition}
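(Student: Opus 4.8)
The plan is to establish the degree bound $D = \sum_{i=1}^r (d_i - 1) + 1$ for which $\Im \Res = I \cap V_D$; this is Macaulay's classical result on the degree of regularity, and I would invoke it rather than reprove it from scratch, but here is how one would argue. The inclusion $\Im \Res \subseteq I \cap V_D$ is immediate: every element $f_1 q_1 + \ldots + f_r q_r$ lies in the ideal $I = \gen{f_1, \ldots, f_r}$ and has degree at most $D$ by the degree bookkeeping built into the definition of the resultant map (since $\deg(f_i q_i) \leq d_i + (D - d_i) = D$). The reverse inclusion is the substantive direction: given $g \in I \cap V_D$, one must show $g$ admits a representation $g = \sum f_i q_i$ with $\deg q_i \leq D - d_i$. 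Writing $g = \sum f_i h_i$ for \emph{some} polynomials $h_i$ (possible since $g \in I$), the issue is that the $h_i$ might have too-large degree with cancellation among the top-degree parts; one clears this by a descending induction on $\max_i (\deg f_i + \deg h_i)$, using that the top-degree homogeneous parts $(f_i)_{d_i}$ have no common zero in $\mbp^{n-1}$ over $\overline{\mbq_p}$ — equivalently that the $f_i$ define a $0$-dimensional scheme with no solutions at infinity — to rewrite any degree-$e$ syzygy among the leading forms, for $e \geq D$, in terms of the Koszul syzygies. This is exactly the point where $D$ being one more than $\sum(d_i-1)$ enters: it is the Macaulay/Lazard bound past which the Koszul complex of the leading forms is exact in the relevant degree, so no obstruction remains. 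I expect this inductive degree-reduction step to be the main obstacle, and in the write-up I would simply cite Macaulay (or a modern reference such as Lazard or the treatment in \cite{TMV2018truncated}) for it.

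Granting $\Im \Res = I \cap V_D$, the second assertion is then pure linear algebra. Fix bases of the source and target of $\Res$ so that it is represented by the matrix $[\Res_{ij}]$; then $\Im \Res$ is the column span of $[\Res_{ij}]$, so $I \cap V_D$ is the column span of $[\Res_{ij}]$, and hence $V_D / (I \cap V_D)$ is the cokernel of $[\Res_{ij}]$. A surjection $\pi \colon V_D \to V_D/(I \cap V_D)$ is therefore represented, in suitable bases, by any matrix whose rows span the annihilator (left kernel) of the column space of $[\Res_{ij}]$ — that is, by a basis of the left nullspace of $[\Res_{ij}]$, which is the same as a basis of the right nullspace of $[\Res_{ij}]^T$. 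Taking such a basis as the rows of a matrix gives $\pi = (\ker [\Res_{ij}]^T)^T$, as claimed: $\ker[\Res_{ij}]^T$ is the matrix whose columns are a basis of $\{ y : [\Res_{ij}]^T y = 0\} = \{ y : y^T [\Res_{ij}] = 0 \}$, and transposing puts those vectors as rows, which act on $V_D$ with kernel precisely $\Im\Res = I \cap V_D$.

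One should note the hypothesis hidden in the phrase ``if $D$ is sufficiently large we obtain a truncated normal form'': for the cokernel map to actually furnish a \emph{truncated normal form} in the sense of the earlier definition, one needs $V := V_D$ and a complement $B$ with $x_i B \subseteq V_D$ and $\dim B = \dim_{\mbq_p} R/I$; the first holds because multiplication by $x_i$ raises degree by one and $B \subseteq V_{D-1}$ can be arranged (or one enlarges $D$ by one more), and the second holds exactly because $D$ lies at or beyond the degree of regularity, so that $\dim V_D - \mathrm{rank}[\Res_{ij}] = \dim_{\mbq_p} R/I$. For the Proposition as stated, only the identity $\Im \Res = I \cap V_D$ and the linear-algebra description of $\pi$ are asserted, so the proof reduces to the two paragraphs above, with the Macaulay bound doing all the real work.
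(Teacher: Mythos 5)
Your proposal is correct and takes essentially the same approach as the paper, which simply cites \cite{TMV2018truncated} (Section~4) for the Macaulay degree bound and leaves the linear algebra implicit; you reproduce that citation strategy while supplying the cokernel/left-nullspace bookkeeping and flagging the implicit no-zeros-at-infinity hypothesis. The only thing worth noting is that the genericity condition you correctly identify as necessary for the Macaulay bound is indeed silently assumed in the paper's statement and carried by the citation.
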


	\begin{proof}
		See \cite[Section 4]{TMV2018truncated}.
	\end{proof}
	
	\subsection{The algorithm}
	
	We now summarize our modifications of \cite[Algorithm 1]{TMV2018truncated}.  Our implementation is available at:
	\begin{center}
		\url{https://github.com/a-kulkarn/pAdicSolver}
	\end{center}
	
	The critical theoretical underpinning of \cite[Algorithm 1]{TMV2018truncated} is Stickelberger's Theorem \cite[page 621]{coxlittleoshea2015ideals}.
	
	\begin{theorem}
		Let $k$ be a field, let $R := k[x_1, \ldots, x_n]$, and let $I$ be an ideal such that $\dim_k R/I$ is finite. Let $p_1, \ldots, p_\ell$ be the geometric points of the affine scheme $\operatorname{Spec}(R/I)$ with multiplicities $m_1, \ldots, m_\ell$ respectively. Let $[f]\: R/I \ra R/I$ be the endomorphism of $R/I$ induced by multiplication by $f \in R$. Then the eigenvalues of $[f]$ and their multiplicities are
		$(f(p_1), m_1), \ldots, (f(p_\ell), m_\ell)$. The $i$-th invariant subspace of $[f]$ is $\{ g \in R/I : g(p_j) = 0 \text{ for } j \neq i \}$. 
	\end{theorem}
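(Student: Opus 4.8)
The plan is to reduce at once to the case $k = \bar k$ and then exploit the structure theory of finite-dimensional commutative algebras. Since $\dim_k R/I$ is unchanged under base change and the characteristic polynomial of a $k$-linear endomorphism is unaffected by extending scalars, it suffices to prove everything after tensoring with an algebraic closure $\bar k$: the eigenvalues and their multiplicities (as roots of the characteristic polynomial of $[f]$) and the subspaces in question are then all defined over $\bar k$. Write $\bar R := \bar k[x_1, \ldots, x_n]$ and $\bar I := I\bar R$, so that $A := \bar R/\bar I$ is a finite-dimensional $\bar k$-algebra whose $\operatorname{Spec}$ is the finite set $\{p_1, \ldots, p_\ell\}$ of geometric points.

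First I would invoke the structure theorem for Artinian rings: $A$ decomposes as a finite product $A \cong \prod_{i=1}^{\ell} A_i$ of local Artinian $\bar k$-algebras, where $A_i = A_{\mathfrak{m}_i}$ is the localization at the maximal ideal $\mathfrak{m}_i$ corresponding to $p_i$, with residue field $A_i/\mathfrak{m}_i A_i \cong \bar k$, and where $\dim_{\bar k} A_i = m_i$ is by definition the multiplicity of $p_i$ on $\operatorname{Spec}(R/I)$. Let $e_1, \ldots, e_\ell$ be the associated primitive idempotents, $e_i$ being the identity of the factor $A_i$, so $A_i = e_i A$. Since each $A_i$ is an ideal of $A$, it is stable under multiplication by any $g \in A$; in particular the decomposition $A = \bigoplus_i A_i$ is $[f]$-invariant, and $[f] = \bigoplus_i [f]_i$, where $[f]_i$ is multiplication by $f$ on $A_i$.

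Next I would analyze each local factor. Let $c_i := f(p_i) \in \bar k$ denote the image of $f$ in the residue field $A_i/\mathfrak{m}_i A_i$. Then $f - c_i \in \mathfrak{m}_i A_i$, and since $A_i$ is local Artinian its maximal ideal is nilpotent, so $[f]_i - c_i \cdot \mathrm{id}_{A_i}$ is a nilpotent endomorphism of the $m_i$-dimensional space $A_i$. Hence the characteristic polynomial of $[f]_i$ is $(T - c_i)^{m_i}$, and multiplying over the factors gives
\[
	\chi_{[f]}(T) = \prod_{i=1}^{\ell} \bigl(T - f(p_i)\bigr)^{m_i}.
\]
Collecting equal values $f(p_i)$ where necessary, this says exactly that the eigenvalues of $[f]$ together with their multiplicities are $(f(p_1), m_1), \ldots, (f(p_\ell), m_\ell)$. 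For the last assertion, observe that $A_i = e_i A$ is precisely the set of $g \in A$ whose components in all the other local factors vanish, which is the meaning of $\{ g \in R/I : g(p_j) = 0 \text{ for } j \neq i \}$ in the statement; it is invariant under $[f]$ (indeed under every $[g]$, being an ideal), and by the previous paragraph $[f]$ acts on it with the single eigenvalue $f(p_i)$.

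The only real content — and hence the step I expect to be the crux — is the Artinian structure theorem together with the identification $\dim_{\bar k} A_i = m_i$ with the scheme-theoretic multiplicity of $p_i$; once these are in hand, the nilpotence argument and the bookkeeping with idempotents are purely formal. Since this is classical material, in the write-up I would simply cite it (e.g.\ \cite[page 621]{coxlittleoshea2015ideals}).
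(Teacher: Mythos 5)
Your argument is correct and follows essentially the same route as the paper's proof: base change to an algebraic closure, invoke the Chinese Remainder Theorem to decompose into local Artinian factors, and use nilpotence of the maximal ideal (Nakayama) to see that $[f] - f(p_i)\cdot\mathrm{id}$ is nilpotent on each factor. The only cosmetic difference is that you keep all the local factors in view and track characteristic polynomials explicitly, whereas the paper reduces to a single local piece and then to constant $f$; the mathematical content is the same.
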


	\begin{proof}
		Let $k^\textrm{al}$ be an algebraic closure for $k$. We may assume that $R/I \otimes_k k^\textrm{al}$ is local by the Chinese Remainder Theorem. Note $R/I \otimes_k k^\textrm{al}$ is Artinian, so for $\mathfrak{m}$ the maximal ideal we have $\mathfrak{m}(\mathfrak{m}^r) = \mathfrak{m}^r$ for some $r > 0$. By Nakayama's lemma we have $\mathfrak{m}^r = 0$. Thus, for any $f \in \mathfrak{m}$, $[f]$ is nilpotent and so has $0$ as an eigenvalue with multiplicity $\dim_{k^\textrm{al}} (R/I \otimes_k k^\mathrm{al})$. Since $R/I \otimes_k k^\textrm{al} = k^\textrm{al} \oplus \mathfrak{m}$ as a $k^\textrm{al}$-vector space, we now need only check the result for constant functions. For $f = 1$, the result is clear.
	\end{proof}
	
	Note that if $k$ is not algebraically closed, the eigenvalues and invariant subspaces may only be defined over a finite extension of $k$; these correspond to points whose coordinates lie in a finite extension of $k$. Critically, the eigenvalues of $[x_i]$ are the $i$-th coordinates of the geometric points. We now give the $p$-adic solver algorithm:
	
	\begin{minipage}{\textwidth}
		\bigskip
		\begin{algorithm} SolveQpSystem($f_1, \ldots, f_m$)
		\begin{algorithmic}[1]
		\REQUIRE
			Polynomials $f_1, \ldots, f_m \in \mbq_p[x_1, \ldots, x_n]$ defining a $0$-dimensional polynomial system.
		\ENSURE
			A set of approximate solutions to the distinct $\mbq_p$-solutions.
			
		\bigskip
		\STATE
			Construct the resultant map $[\Res_{ij}]$ with $D$ sufficiently large
			
		\STATE \label{algo: polysolve: kernel step}
			Compute $[\pi_{ij}] = (\ker [\Res_{ij}]^T)^T$
		\STATE
			Extract the submatrix $M$ of $[\pi_{ij}]$ whose columns correspond to monomials $m$ such that $\deg{m} < D$.
			
		\STATE \label{algo: polysolve: iwasawa step}
			Compute a square subblock of $M$ that is pnumerically well conditioned, using the $p$-adic $QR$-factorization. Let $\mathbf{b}$ be the basis corresponding to this sub-block.
			
		\STATE
			Construct the multiplication by $x_i$ matrices $[x_1]_\mathbf{b}, \ldots, [x_n]_\mathbf{b}$ from the columns of $[\pi_{ij}]_\mathbf{b}$.
			
		\STATE \label{eigenalgorithm changed}
			Compute the eigenvectors of a random linear combination via iteration.
	
		\RETURN 
			The eigenvalues for $[x_1]_\mathbf{b}, \ldots, [x_n]_\mathbf{b}$ indexed by the invariant subspaces.
			
		\end{algorithmic}
		\end{algorithm}
		\bigskip
	\end{minipage}

	By $[x_i]_{\mathbf{b}}$, we mean the matrix corresponding to the multiplication-by-$x_i$ operator in $\mathbf{b}$-coordinates. The choice of $B := \Span(\mathbf{b})$ in step~\ref{algo: polysolve: iwasawa step} defines a subspace of $V_D$ of the same dimension as $R/I$, hence a section $s\: R/I \ra V_D$ and thus a truncated normal form. Since $[x_i] B \ssq V_D$, we can easily read off the action of the operator $[x_i]\: B \ra B$ from the columns of $[\pi_{ij}]$. For explicit details see \cite[Theorem 4 and following text]{TMV2018truncated}.

	The differences between the $\mbq_p$ and $\mbr$ versions of the algorithm are precisely the adaptations in the linear algebra, i.e, steps \ref{algo: polysolve: kernel step}, \ref{algo: polysolve: iwasawa step}, and \ref{eigenalgorithm changed}. Note that if $k$ is any finite precision field for which these steps can be implemented, the general recipe of \cite[Algorithm 1]{TMV2018truncated} allows us to compute the approximate solutions. Examples include ${k = \mbr, \mbc, \mbq_p, \mbf_q(\!(t)\!)}$.
	
	Finally, instead of computing a truncated normal form in steps $1$ and $2$, we may simply use a Gr\"obner basis instead. For systems where a Gr\"obner basis is already known or where the polynomials in the basis have small coefficients, the numerous optimizations of Gr\"obner basis algorithms are likely preferable from a practical standpoint. The major advantage of our implementation is the fact that the coefficient size is capped at $p^N$.

\section{Acknowledgements}

	I would like to thank Yue Ren, David Roe, and Simon Telen for helpful discussions about the article. I would also like to thank Marta Panizzut, Emre Sert\"oz, and Bernd Sturmfels for their helpful comments.


\renewcommand{\MR}[1]{}

\begin{bibdiv}
	\begin{biblist}

	\bib{TMV2018truncated}{article}{
		author={van Barel, Marc},
		author={Mourrain, Bernard},
		author={Telen, Simon},
		title={Solving polynomial systems via truncated normal forms},
		journal={SIAM J. Matrix Anal. Appl.},
		volume={39},
		date={2018},
		number={3},
		pages={1421--1447},
		issn={0895-4798},
		review={\MR{3857888}},
		doi={10.1137/17M1162433},
	}
	
	\bib{caruso2017computationswithpadics}{misc}{
		author = {Caruso, Xavier},
		title = {Computations with $p$-adic numbers},
		year = {2017},
		note = {arXiv:1701.06794},
	}
	
	\bib{CRV2015linear}{article}{
		author={Caruso, Xavier},
		author={Roe, David},
		author={Vaccon, Tristan},
		title={$p$-adic stability in linear algebra},
		conference={
			title={ISSAC'15---Proceedings of the 2015 ACM International Symposium
				on Symbolic and Algebraic Computation},
		},
		book={
			publisher={ACM, New York},
		},
		date={2015},
		pages={101--108},
		review={\MR{3388288}},
	}

	\bib{CRV2017characteristic}{article}{
		author={Caruso, Xavier},
		author={Roe, David},
		author={Vaccon, Tristan},
		title={Characteristic polynomials of $p$-adic matrices},
		conference={
			title={ISSAC'17---Proceedings of the 2017 ACM International Symposium
				on Symbolic and Algebraic Computation},
		},
		book={
			publisher={ACM, New York},
		},
		date={2017},
		pages={389--396},
		review={\MR{3703711}},
	}
	
	\bib{coxlittleoshea2015ideals}{book}{
		author={Cox, David A.},
		author={Little, John},
		author={O'Shea, Donal},
		title={Ideals, varieties, and algorithms},
		series={Undergraduate Texts in Mathematics},
		edition={4},
		note={An introduction to computational algebraic geometry and commutative
			algebra},
		publisher={Springer, Cham},
		date={2015},
		pages={xvi+646},
		isbn={978-3-319-16720-6},
		isbn={978-3-319-16721-3},
		review={\MR{3330490}},
		doi={10.1007/978-3-319-16721-3},
	}
	
	\bib{dixon1982exact}{article}{
		author={Dixon, John D.},
		title={Exact solution of linear equations using $p$-adic expansions},
		journal={Numer. Math.},
		volume={40},
		date={1982},
		number={1},
		pages={137--141},
		issn={0029-599X},
		review={\MR{681819}},
		doi={10.1007/BF01459082},
	}
	
	\bib{Fulman2002random}{article}{
		author={Fulman, Jason},
		title={Random matrix theory over finite fields},
		journal={Bull. Amer. Math. Soc. (N.S.)},
		volume={39},
		date={2002},
		number={1},
		pages={51--85},
		issn={0273-0979},
		review={\MR{1864086}},
		doi={10.1090/S0273-0979-01-00920-X},
	}
	
	\bib{nemohecke}{article}{
		author={Fieker, Claus},
		author={Hart, William},
		author={Hofmann, Tommy},
		author={Johansson, Fredrik},
		title={Nemo/Hecke: computer algebra and number theory packages for the
			Julia programming language},
		conference={
			title={ISSAC'17---Proceedings of the 2017 ACM International Symposium
				on Symbolic and Algebraic Computation},
		},
		book={
			publisher={ACM, New York},
		},
		date={2017},
		pages={157--164},
		review={\MR{3703682}},
	}
	
	\bib{Kedlaya2010differential}{book}{
		author={Kedlaya, Kiran S.},
		title={$p$-adic differential equations},
		series={Cambridge Studies in Advanced Mathematics},
		volume={125},
		publisher={Cambridge University Press, Cambridge},
		date={2010},
		pages={xviii+380},
		isbn={978-0-521-76879-5},
		review={\MR{2663480}},
		doi={10.1017/CBO9780511750922},
	}

	\bib{rutishauser1958eigenvalue}{article}{
		author={Rutishauser, Heinz},
		title={Solution of eigenvalue problems with the $LR$-transformation},
		journal={Nat. Bur. Standards Appl. Math. Ser.},
		volume={1958},
		date={1958},
		number={49},
		pages={47--81},
		review={\MR{0090118}},
	}

	\bib{Schikhof1984ultrametric}{book}{
		author={Schikhof, W. H.},
		title={Ultrametric calculus},
		series={Cambridge Studies in Advanced Mathematics},
		volume={4},
		note={An introduction to $p$-adic analysis},
		publisher={Cambridge University Press, Cambridge},
		date={1984},
		pages={viii+306},
		isbn={0-521-24234-7},
		review={\MR{791759}},
	}

	\bib{Storjohann2001frobenius}{article}{
		author={Storjohann, Arne},
		title={Deterministic computation of the Frobenius form (extended
			abstract)},
		conference={
			title={42nd IEEE Symposium on Foundations of Computer Science},
			address={Las Vegas, NV},
			date={2001},
		},
		book={
			publisher={IEEE Computer Soc., Los Alamitos, CA},
		},
		date={2001},
		pages={368--377},
		review={\MR{1948725}},
	}
	
	\bib{wilkinson1965convergence}{article}{
		author={Wilkinson, J. H.},
		title={Convergence of the ${\rm LR}$, ${\rm QR}$, and related algorithms},
		journal={Comput. J.},
		volume={8},
		date={1965},
		pages={77--84},
		issn={0010-4620},
		review={\MR{0183108}},
		doi={10.1093/comjnl/8.3.273},
	}

\end{biblist}
\end{bibdiv}

\end{document}